\documentclass[12pt]{amsart}

\usepackage{times,mathptmx}
\usepackage[utf8x]{inputenc}
\usepackage[OT2, T1]{fontenc}
\usepackage{amsmath,amsfonts,amssymb,mathrsfs,latexsym,amscd,amsthm}
\usepackage{dsfont,url}
\usepackage{verbatim}
\usepackage{hyperref}
\usepackage{enumitem}
\usepackage[all]{xy}
\theoremstyle{plain}

\newtheorem{theorem}{Theorem}[section]
\newtheorem{conjecture}[theorem]{Conjecture}
\newtheorem{lemma}[theorem]{Lemma}
\newtheorem{corollary}[theorem]{Corollary}
\newtheorem{proposition}[theorem]{Proposition}

\theoremstyle{definition}

\newtheorem{remark}[theorem]{Remark}

\newcommand{\BR}{\mathbf{R}}

\newcommand{\BZ}{\mathbf{Z}}
\newcommand{\BQ}{\mathbf{Q}}
\newcommand{\BN}{\mathbf{N}}
\newcommand{\BP}{\mathbf{P}}
\newcommand{\CS}{\mathcal{S}}
\newcommand{\BE}{\mathbf{E}}
\newcommand{\CE}{\mathcal{E}}
\newcommand{\CH}{\mathcal{H}}

\newcommand{\rank}{\operatorname{rank}}

\newcommand{\hetale}{\operatorname{H}^1_{\text{\'et}}}

\newcommand{\Spec}{\operatorname{Spec}}
\newcommand{\Sel}{\operatorname{Sel}^2}

\newcommand{\Kum}{\operatorname{Kum}}
\newcommand{\wKum}{\widetilde{\operatorname{Kum}}}
\newcommand{\tor}{\operatorname{tor}}
\newcommand{\kmodksq}{k^*/k^{*2}}

\DeclareSymbolFont{cyrletters}{OT2}{wncyr}{m}{n}
\DeclareMathSymbol{\Sha}{\mathalpha}{cyrletters}{"58}

\usepackage{geometry}
\geometry{vmargin=2.9cm, hmargin=3cm}

\setcounter{tocdepth}{1}

\title[Rational points on K3 surfaces]{Rational points on elliptic K3 surfaces \\ of quadratic twist type}
\author{Zhizhong Huang}
\address{Institut für Algebra, Zahlentheorie und Diskrete Mathematik\\ Leibniz Universität Hannover\\
	30167 Hannover, Deutschland.}
\email{zhizhong.huang@yahoo.com} 

\subjclass[2010]{14G05, 14J27, 11G05}
\keywords{Elliptic K3 surfaces, rational points}
\begin{document}
	\begin{abstract}
		In studying rational points on elliptic K3 surfaces of the form
		$$f(t)y^2=g(x),$$
		where $f,g$ are cubic or quartic polynomials (without repeated roots), we introduce a condition on the quadratic twists of two elliptic curves having simultaneously positive Mordell-Weil rank. We prove a necessary and sufficient condition for the Zariski density of rational points by using this condition, and we relate it to the Hilbert property. Applying to surfaces of Cassels-Schinzel type, we prove unconditionally that rational points are dense both in Zariski topology and in real topology.
	\end{abstract}
	\maketitle
	\tableofcontents
	
	\section{Introduction}
	\subsection{Topology of rational points on elliptic K3 surfaces -- an overview}
		A basic approach of studying how many rational points an algebraic variety possesses is via their density in Zariski topology and in analytic topology. A conjecture of Mazur states the following:
	\begin{conjecture}[\cite{Mazur}, Conjecture 1]\label{conj:Mazur} 
		Let $X$ be a smooth variety over $\BQ$. Suppose that $X(\BQ)$ is Zariski dense. Then the topological closure of $X(\BQ)$ in $X(\BR)$ is open (that is, a finite union of connected components of $X(\BR)$). 	\footnote{However this conjecture turns out to be false in its full generality, as demonstrated by counter-examples constructed by Colliot-Thélène, Skorobogatov and Swinnerton-Dyer in \cite[\S5]{CT-Sk-SD}. 
		Several amendments to Conjecture \ref{conj:Mazur} have also been proposed, cf. e.g. \cite[Conjectures 2 \& 3]{Mazur2} and \cite[Conjectures 4 \& 5]{CT-Sk-SD}.}
	\end{conjecture}
	
	A finer notion than the Zariski density is the so-called \emph{Hilbert property} (abbreviated as (HP) in the sequel, cf. \cite[Definition 3.1.2]{Serre}).
	Recall that for $X$ a variety over a number field $k$, the set $X(k)$ is \emph{thin} if there exist a finite number of dominant generically finite morphisms $\pi_i:Y_i\to X,1\leqslant i\leqslant r$ over $k$ without rational sections such that $X(k)\setminus \cup_{i=1}^r \pi_i(Y_i(k))$ is contained in a proper Zariski closed subset of $X$ (cf. \cite[\S3.1]{Serre}, \cite[\S 1.1]{Corvaja-Zannier}). We say that $X$ verifies (HP) over $k$ if $X(k)$ is not thin.
	The following conjecture was raised by Corvaja and Zannier:
	\begin{conjecture}[\cite{Corvaja-Zannier}, Question-Conjecture 1]\label{conj:CZ}
		Suppose that $X$ is smooth and algebraically simply connected. If $X(k)$ is Zariski dense, then $X$ verifies (HP) over the number field $k$.
	\end{conjecture}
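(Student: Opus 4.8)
Because Conjecture~\ref{conj:CZ} is the Corvaja--Zannier conjecture and remains open in full generality, what I can propose is not a complete proof but the natural line of attack, together with the point at which it stalls. The plan is to show directly that $X(k)$ is not thin. Since $X(k)$ is Zariski dense by hypothesis, no thin set of type I (one contained in a proper Zariski closed subset) can account for it; it therefore suffices to rule out the possibility that $X(k)\subseteq Z\cup\bigcup_{i=1}^r \pi_i(Y_i(k))$ with $Z$ proper closed and each $\pi_i\colon Y_i\to X$ a dominant generically finite morphism from an irreducible $Y_i$ of degree $d_i\geq 2$. Replacing each $Y_i$ by its normalization and restricting to a common dense open $U\subseteq X$, I would reduce to the case of finite covers $\pi_i$ over $U$.

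Next I would bring in simple connectedness. As $X$ has trivial \'etale fundamental group it admits no nontrivial connected finite \'etale cover, so every $\pi_i$ of degree $\geq 2$ is ramified, with branch locus a nonempty divisor $D_i\subset X$ by Zariski--Nagata purity. The generic fibre of $\pi_i$ corresponds to a nontrivial finite extension $k(X)\hookrightarrow k(Y_i)$ with monodromy group $G_i$ acting on the geometric generic fibre. For $x\in U(k)$, lifting $x$ into $Y_i(k)$ amounts to the specialized fibre $\pi_i^{-1}(x)$ acquiring a $k$-point, i.e.\ to the fibre extension splitting off a degree-one factor. The crux is then a Hilbert irreducibility statement for $X$ itself: one wants that, away from a thin set of $x$, the local decomposition type corresponds to a monodromy element with no fixed point, forcing $x\notin\pi_i(Y_i(k))$ for every $i$ simultaneously and thereby contradicting the assumed covering.

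The main obstacle is exactly the lack of such a Hilbert irreducibility theorem for an arbitrary simply connected variety. Triviality of $\pi_1$ constrains the \emph{unramified} covers of $X$, whereas thin sets of type II are produced by \emph{ramified} covers, and there is no general mechanism forcing the ramified covers of a simply connected $X$ to specialize irreducibly along a non-thin set of $k$-points. For $X=\BP^n$ this is classical; for a variety carrying a fibration into rational or elliptic curves one can transport irreducibility through the base, which is the route I would actually take for the K3 surfaces of this paper. There the \'etale structure is supplemented by the elliptic fibration and by a dense set of fibres of positive Mordell--Weil rank coming from the quadratic-twist condition, and these supply enough well-distributed rational points to defeat each $\pi_i$. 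Passing from such fibred, point-rich situations to a general simply connected $X$ --- one with no fibration and no a priori supply of points --- is precisely what keeps Conjecture~\ref{conj:CZ} beyond reach.
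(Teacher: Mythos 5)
This statement is a conjecture, not a theorem: the paper cites it as Question--Conjecture 1 of Corvaja--Zannier and offers no proof of it (it is used only as a hypothesis, e.g.\ in Remark \ref{rmk:oneCgivesinfinite}, while Theorem \ref{thm:HP} runs in the converse direction, extracting a necessary condition from (HP)). Your refusal to manufacture a proof is therefore the correct response, and your diagnosis of the obstruction --- purity and triviality of $\pi_1$ only constrain \emph{unramified} covers, whereas thin sets of type II come from \emph{ramified} covers, for which no general Hilbert-irreducibility mechanism exists on an arbitrary simply connected $X$ --- is accurate and consistent with the paper's own discussion in Section \ref{se:remarks}, where the covers ramified along the sixteen exceptional lines and the absence of a third elliptic fibration are singled out as exactly what blocks the known (Demeio-style) arguments for Cassels--Schinzel type surfaces.
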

	
	Throughout this article, by a K3 surface, we mean a projective, smooth, geometrically integral surface $\CE$ defined over a field $k$ having trivial canonical class and $\operatorname{H}^1(\CE,\mathcal{O}_\CE)=0$. Moreover it is called \emph{elliptic} if it admits over $k$ a fibration $\pi:\CE\to B$ onto a curve $B$ whose generic fibre is a smooth genus one curve.
	Results on producing rational points (over the ground field) on K3 surfaces defined over number fields are rare in current literature, although a result of Bogomolov-Tschinkel \cite{B-T} states that rational points are potentially dense on elliptic K3 surfaces (i.e. rational points defined over a finite extension of the ground field are Zariski dense). 
	

	In the context of an elliptic K3 surface $\CE\to\BP^1_{\BQ}$, Conjecture \ref{conj:Mazur} implies\footnote{To the best of author's knowledge, no counter-example to Conjecture \ref{conj:Mazur} is currently known for K3 surfaces.} (cf. \cite[p. 40]{Mazur}):
	\begin{conjecture}[\cite{Mazur}, Conjecture 4]
		For $t\in \BP^1_\BQ$, let $\CE_t$ be the fibre $\pi^{-1}(t)$. Then the family $\{\CE_t\}_{t\in \BP^1_\BQ}$ verifies one of the following exclusive conditions.
		\begin{enumerate}
			\item The elliptic curves $\CE_t$ has Mordell-Weil rank $0$ for all but a finite number of elements $t\in\BP^1(\BQ)$.
			\item The set $\{t\in\BP^1(\BQ):\rank(\CE_t(\BQ))>0\}$ is dense in $\BP^1(\BR)$.
		\end{enumerate}
	\end{conjecture}
	
	All the aforementioned conjectures for K3 surfaces are consequences of a stronger one due to Skorobogatov predicting that they satisfy weak approximation with Brauer-Manin obstruction (cf. \cite[p. 817]{H-Sk}).



	\subsection{Summary of results and layout of the paper}
	
	The object of study in this paper is a family of K3 surfaces of Cassels-Schinzel type \cite{C-S} with affine model
	\begin{equation}\label{eq:CS}
	\mathfrak{S}^{d,1}:d(1+T^4)Y^2=X^3-X~\subset~\mathbb{A}_{X,Y,T}^3,
	\end{equation}
	where $d\in\BN_{\geqslant 1}$. They arise as quadratic twist type elliptic pencils over $\BQ$ (parametrized by the coordinate $T$) that are \emph{isotrivial} (i.e. the $j$-invariant of this family is constant on $T$).
	Such elliptic surfaces admit a section over $\BQ$ at $X=\infty$ but the Mordell-Weil rank of the group formed by all sections over $\BQ$ (sometimes called ``geometric rank'') is zero. One main result of this paper is the following (cf. Section \ref{se:CSsurfaces}):
	\begin{theorem}\label{thm:CS}
		There exists an infinite set of square-free integers $d$ (including $1,7,\cdots$) such that the surface $\mathfrak{S}^{d,1}$ verifies Conjecture \ref{conj:Mazur}, and that $\mathfrak{S}^{d,1}(\BQ)$ is Zariski-dense.
	\end{theorem}
	Unless otherwise specified, all results in our article are unconditional on standard conjectures on elliptic curves. Theorem \ref{thm:CS} fits into the empirical fact that if there exist extra rational points outside the evident smooth rational curves on a K3 surface, they tend to be Zariski dense (cf. \cite{Swinnerton-Dyer}). See also a recent result of Gvirtz \cite[\S4.1]{Gvirtz} for another quadratic twist family of $j$-invariant $0$.
	
	Amongst classical approaches to the Zariski density of rational points on isotrivial elliptic surfaces, the method of \emph{variation of the root number} is pioneered by Rohrlich \cite{Rohrlich} and stimulates later works such as \cite{VA} and \cite{Desjardins}. As the family \eqref{eq:CS} has constant root number $-1$ (cf. $\S$\ref{se:rootnumber}), the Zariski density of rational points on such surfaces was previously conditional on the parity conjecture, asserting that the root number agrees with the parity of the Mordell-Weil rank of an elliptic curve, which is a weak version of the Birch-Swinnerton--Dyer conjecture. Our result also covers several elliptic surfaces in \eqref{eq:CS} with constant root number $+1$, for which there were no conditional results about the Zariski density of rational points previously.
	
	The Cassels-Schinzel surfaces \eqref{eq:CS} are particular types of (twisted) Kummer surfaces associated to the product of two (twisted) elliptic curves (cf. Section \ref{se:Kummer}). Let us fix a number field $k$, and $f,g$ two one-variable polynomials without repeated roots and such that $\deg f,\deg g\in \{3,4\}$. We define two hyperelliptic curves $$E_1:y^2=g(x),\quad E_2:s^2=f(t)$$ of genus one.
	The associated Kummer variety, denoted by $\Kum(E_1\times E_2)$, has $k$-affine model \begin{equation}\label{eq:Kum}
	f(T)Y^2=g(X)
	\end{equation}(cf. Section \ref{se:Kummervar}). To state our second main result which releases our strategy of proving Theorem \ref{thm:CS} and addresses Conjecture \ref{conj:CZ}, we introduce the condition ``\eqref{eq:bothrankpos}''\footnote{abbreviation for ``simultaneously positive rank''} (with parameter $[C]\in\kmodksq$) for the Kummer variety $\Kum(E_1\times E_2)$ as follows. 
	Let $E_1^C$ (resp. $E_2^C$) denote the quadratic twist of $E_1$ (resp. $E_2$) by $[C]\in\kmodksq$. We define the condition
	\begin{equation}\label{eq:bothrankpos}
	\rank(E_1^C(k))\rank(E_2^C(k))>0. \tag{SPR(C)}
	\end{equation}
	Here by writing ``rank'' we implicitly mean that $E_i(k)\neq\varnothing$ for any $i=1,2$, so that they are equipped with some group law.	   
	The condition \eqref{eq:bothrankpos} arises in a natural way from our approach of parametrizing rational points on \eqref{eq:Kum} (cf. Theorem \ref{thm:Z2torsors}) in a similar manner to \cite{CT-Sk-SD} (except that here we are taking unramified double coverings over an open (non-proper) variety).
	\begin{theorem}\label{thm:HP}
		\hfill
		\begin{enumerate}[label=(\roman*)]
			\item The set $\Kum(E_1\times E_2)(k)$ is Zariski dense if and only if there exists $[C]\in\kmodksq$ such that \eqref{eq:bothrankpos} holds.
			\item  If $\Kum(E_1\times E_2)$ verifies (HP) over $k$, then there exist infinitely many classes $[C]\in\kmodksq$ such that \eqref{eq:bothrankpos} holds.
		\end{enumerate}
	\end{theorem}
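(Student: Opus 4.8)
The plan is to extract everything from the disjoint decomposition $V(k)=\bigsqcup_{[C]\in\kmodksq}\phi_C(U_C(k))$ of Theorem~\ref{thm:Z2torsors}(ii), reducing both statements to a single finiteness input coming from the uniform boundedness of torsion. The first observation is that, by the very definition \eqref{eq:Uc} of $U_C$, a $k$-point of $U_C$ is a pair $(P,Q)$ with $P\in E_1^C(k)$, $Q\in E_2^C(k)$ and neither $P$ nor $Q$ in the branch locus; hence $U_C(k)$ is Zariski dense in the abelian surface $E_1^C\times E_2^C$ if and only if both $E_1^C(k)$ and $E_2^C(k)$ are infinite (a product of Zariski dense subsets of the factors being Zariski dense in the product), which for genus one curves with a rational point means exactly that both have positive rank, i.e.\ \eqref{eq:bothrankpos}. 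Since $\phi_C$ is dominant, $\phi_C(U_C(k))$ is then Zariski dense in $\Kum(E_1\times E_2)$. This already gives the implication ``$\Leftarrow$'' of (i): if \eqref{eq:bothrankpos} holds for one $[C]$ then $\phi_C(U_C(k))\subseteq\Kum(E_1\times E_2)(k)$ is dense.

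The crux, underlying both the converse of (i) and statement (ii), is to show that the classes $[C]$ for which \eqref{eq:bothrankpos} fails contribute only to a fixed proper closed subset. I claim there is a finite set $\Xi\subset\BP^1(k)$, depending only on $E_1$ and $k$, containing $\psi_X(\phi_C(U_C(k)))$ for every $[C]$ with $\rank E_1^C(k)=0$, and symmetrically a finite $\Theta$ for $E_2$. To see this, fix such a $[C]$ and a point $P\in E_1^C(k)$ with $y(P)\neq 0$. Because $\rank E_1^C(k)=0$, the Jacobian $\mathbb{E}_1^C$ has rank $0$, so the class $[P]-[\varrho_1 P]\in\mathbb{E}_1^C(k)$ is torsion; by the uniform boundedness theorem for torsion (Merel; over $\BQ$ already Mazur) its order is bounded by a constant $B=B([k:\BQ])$ independent of $C$. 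Under the $\bar k$-isomorphism $\mathbb{E}_1^C\simeq\mathbb{E}_1$ this singles out finitely many torsion classes of $\mathbb{E}_1(\bar k)$, whose preimages under the finite morphism $P\mapsto[P]-[\varrho_1 P]$ form a finite subset of $E_1(\bar k)$; their $X$-coordinates constitute $\Xi$. Now for any $[C]$ violating \eqref{eq:bothrankpos} at least one of $\rank E_1^C(k)$, $\rank E_2^C(k)$ vanishes, so by \eqref{eq:doublephi} (where the $X$- and $T$-coordinates of $\phi_C$ are read off $E_1^C$ and $E_2^C$ respectively) one gets $\phi_C(U_C(k))\subseteq\psi_X^{-1}(\Xi)\cup\psi_T^{-1}(\Theta)=:Z$, a finite union of fibres, hence a proper closed subset of $\Kum(E_1\times E_2)$.

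With $Z$ available, the converse of (i) follows by contraposition: if \eqref{eq:bothrankpos} fails for every $[C]$, then $V(k)\subseteq Z$, and since the complementary locus $\Kum(E_1\times E_2)\setminus V=\psi_X^{-1}(F_1)\cup\psi_T^{-1}(F_2)$ is itself a finite union of curves, all of $\Kum(E_1\times E_2)(k)$ lies in a proper closed subset, contradicting Zariski density. For (ii) I again argue by contraposition: assume only finitely many classes $[C_1],\dots,[C_r]$ satisfy \eqref{eq:bothrankpos}. Then
\[
\Kum(E_1\times E_2)(k)\ \subseteq\ Z\ \cup\ \bigl(\psi_X^{-1}(F_1)\cup\psi_T^{-1}(F_2)\bigr)\ \cup\ \bigcup_{i=1}^{r}\phi_{C_i}\bigl((E_1^{C_i}\times E_2^{C_i})(k)\bigr).
\]
The first two terms lie in a proper closed subset, while each $\phi_{C_i}\colon E_1^{C_i}\times E_2^{C_i}\dashrightarrow\Kum(E_1\times E_2)$ is dominant and generically finite of degree $2$, so $\phi_{C_i}((E_1^{C_i}\times E_2^{C_i})(k))$ is a thin set of the second kind. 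Hence $\Kum(E_1\times E_2)(k)$ is thin, contradicting (HP).

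The main obstacle is precisely the finiteness of $\Xi$ and $\Theta$: a priori, as $[C]$ ranges over the infinitely many rank-zero twists the $X$-coordinates of their rational points could accumulate, and collapsing them into a single finite set is exactly what the uniform boundedness of torsion over the fixed field $k$ provides. Two technical points must be handled with care. First, when $\deg g=4$ the curve $E_1^C$ is a genuine torsor rather than its own Jacobian, so the argument must be run on $\mathbb{E}_1^C$ through the origin-free difference morphism $P\mapsto[P]-[\varrho_1 P]$ (for $\deg g=3$ one may instead use $P\mapsto 2[P]$, the point $P$ itself then being torsion of bounded order). Second, one must check that the class $[C]$ attached to a point of $V(k)$ by the evaluation map is consistently recorded by both $[g(X)]$ and $[f(T)]$, which is forced on $V$ by the relation $f(T)Y^2=g(X)$ together with $Y\neq 0,\infty$.
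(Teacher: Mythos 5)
Your proposal is correct, and its overall skeleton coincides with the paper's: both use the decomposition of Theorem \ref{thm:Z2torsors}, reduce everything to showing that the classes $[C]$ failing \eqref{eq:bothrankpos} contribute only inside a single proper Zariski closed subset, and conclude (ii) by the same thinness/contraposition argument with the degree-two maps $\phi_{C_i}$. Where you genuinely diverge is in the proof of that key finiteness step. The paper first establishes Lemma \ref{le:Rohrlich} (Rohrlich's observation extended to number fields), which applies only to \emph{cubic} Weierstrass models $v^2=h(u)$: it combines Merel's uniform bound with a Galois-representation argument showing that for each $N\geqslant 3$ at most one twist class can carry a rational point of order $N$. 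Because that lemma is confined to cubic models, the paper's Lemma \ref{le:simultaneoustwist} must then treat the cases $\deg f=4$ or $\deg g=4$ separately, transporting the problem to the cubic resolvent models through the $2$-covering maps in the diagram \eqref{eq:coveringkum}. You instead run a single argument valid for any model: for $P\in E_1^C(k)$ with $\rank E_1^C(k)=0$, the class $[P]-[\varrho_1 P]$ is a $k$-rational torsion point of the Jacobian, of order bounded by Merel's constant $B(k)$ uniformly in $C$; since the $\bar{k}$-twist isomorphism preserves $X$-coordinates, is compatible with the difference morphisms, and carries bounded-order torsion to bounded-order torsion, pulling the finite set of bounded-order torsion of $\mathbb{E}_1(\bar{k})$ back under the finite morphism $P\mapsto[P]-[\varrho_1 P]$ produces a finite set $\Xi$ of $X$-coordinates independent of $C$. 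This treats quartic torsors and cubic curves on the same footing, dispensing with both the Galois-representation comparison of twists and the $2$-covering reduction; what the paper's route buys in exchange is the more refined standalone statement of Lemma \ref{le:Rohrlich} (only finitely many twist classes carry torsion of order $>2$), and the covering diagram \eqref{eq:coveringkum}, which it reuses in its closing discussion of covers between Kummer surfaces. Two minor points in your write-up deserve a remark but are not gaps: your "dense iff both factors infinite" observation is the paper's Lemma \ref{le:Zdense}, obtained more simply via products of infinite (hence dense) sets of points on curves; and when \eqref{eq:bothrankpos} fails because some $E_i^C(k)=\varnothing$, the rank is undefined rather than zero, but then $U_C(k)=\varnothing$ and there is nothing to bound.
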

	\begin{remark}\label{rmk:oneCgivesinfinite}
		Suppose that Conjecture \ref{conj:CZ} holds for $\Kum(E_1\times E_2)$. We infer from Theorem \ref{thm:HP} that the existence of one class $[C]\in\kmodksq$ with \eqref{eq:bothrankpos} is equivalent to the existence of infinitely many such $[C]$. See also the discussion in \S\ref{se:simulnonvan}.
	\end{remark}
	We shall prove Theorem \ref{thm:HP} in $\S$\ref{se:proofSPR}, followed by different approaches to the condition \eqref{eq:bothrankpos} in $\S$\ref{se:simulnonvan}.
	Applying to the Cassels-Schinzel type surfaces \eqref{eq:CS}, we point out that the validity of (HP), and hence that of \eqref{eq:bothrankpos} with infinitely many $[C]$ for the surface \eqref{eq:CS} has striking consequence on the \emph{congruent numbers} (cf. Remark \ref{rmk:congruent}). Based on current unconditional results towards the detection of congruent numbers, we establish \eqref{eq:bothrankpos} with infinitely many $[C]\in\BQ^*/\BQ^{*2}$ for another family of surfaces similar to \eqref{eq:CS} with a different coefficient for the term $T^4$ (cf. Theorem \ref{th:mainthm1}), therefore providing evidence in favour of (HP).
	
	It is worth mentioning that, when $(\deg f,\deg g)=(3,3)$ (so that the surface \eqref{eq:Kum} is untwisted) and under some extra assumption on the $j$-invariants of two elliptic curves defined by $f$ and $g$, the Zariski density is known unconditionally by the work of Kuwata-Wang \cite{K-W}, where the third elliptic fibration (in coordinate $Y$) comes into play in their method. More recently, Demeio \cite{Demeio} surprisingly shows that, the Zariski density is equivalent to (HP), whence one deduces the equivalence between \eqref{eq:bothrankpos} (with a single $[C]$ or infinitely many by Remark \ref{rmk:oneCgivesinfinite}) and (HP) in this case (cf. also Corollary \ref{thm:degree3Kummer}). There again, the use of the elliptic fibration in $Y$ is essential. It does not seem evident to extend their results to cases where $\deg f$ or $\deg g$ is $4$ since the fibration in $Y$ is not of genus one any more.
	We give a short discussion in Section \ref{se:remarks}.
	

\subsection*{Acknowledgements.} We thank Marc Hindry for drawing our attention to the Cassels-Schinzel surfaces as well as enlightening guidance. We are grateful to Mike Bennett, Julian Demeio, Julie Desjardins, Damián Gvirtz, Emmanuel Lecouturier, Marusia Rebolledo, Alexei Skorobogatov for fruitful discussions, and to Yang Cao for generous help.
We gratefully thank the hospitality of Max-Planck-Institut für Mathematik in Bonn and the organizers of the CMO-BIRS conference 18w5012 in Oaxaca, where part of this work was done and reported. The numerous suggestions and corrections of anonymous referees have greatly improved the exposition.
While working on this project the author was partially supported by the project ANR Gardio and a Riemann Fellowship.

\section{Quadratic twist type K3 surfaces}\label{se:Kummer}
Twisted Kummer surfaces \eqref{eq:Kum} are examples of Kummer varieties (in the sense of \cite{H-Sk}). We start by recalling some basic facts on Kummer varieties in \S\ref{se:Kummervar}. For more details, see \cite[\S6]{H-Sk}. In \S\ref{se:Weierstrass} we recall a classical covering map between a hyperelliptic quartic and the elliptic curve defined its cubic resolvent. In \S\ref{se:eqKum}, we turn to giving affine models and quotient maps defining $\Kum(\mathcal{A})$ when $\mathcal{A}$ is the product of two hyperelliptic curves. In \S\ref{se:parametrization} we give explicit parametrization of rational points on $\Kum(\mathcal{A})$ by quadratic twists of $\mathcal{A}$. Throughout this section, we fix $k$ a field of characteristic $0$ and $\bar{k}$ an algebraic closure. 
\subsection{Preliminaries on Kummer varieties}\label{se:Kummervar}

Let $A$ be an abelian variety over $k$. Let $\mathcal{T}$ be a $k$-torsor under $A[2]$, and $\mathcal{A}$ be a $k$-torsor under $A$ whose class is the image of $[\mathcal{T}]\in \operatorname{H}^1(k,A[2])$ in $\operatorname{H}^1(k,A)$. Then $[\mathcal{T}]\in\operatorname{H}^1(k,A)[2]$ and $\mathcal{A}$ is equipped with a \emph{$2$-covering map} $\Phi:\mathcal{A}\to A$  (cf. \cite[Proposition 3.3.2]{Skorobogatov}) such that $\mathcal{T}=\Phi^{-1}(0)$. Indeed, we have $\mathcal{A}\simeq (A\times \mathcal{T})/A[2]$, where $A[2]$ acts diagonally on $A\times \mathcal{T}$ and $\Phi$ is induced by the first projection to $A$. The antipodal involution $\iota_A:A\to A$ (i.e. the multiplication by $-1$ morphism) commutes with the action of $A[2]$ and induces an involution $\iota_\mathcal{A}:\mathcal{A}\to \mathcal{A}$.
Let $$\pi_\mathcal{A}:\mathcal{A}\to \wKum(\mathcal{A}):=\mathcal{A}/\langle\iota_\mathcal{A}\rangle$$ be the quotient map. Note that the singular locus of the variety $\wKum(\mathcal{A})$ comprises precisely the fixed points of $\iota_\mathcal{A}$. The \emph{Kummer variety} $\Kum(\mathcal{A})$ associated to $\mathcal{A}$ is the minimal desingulization of $\wKum(\mathcal{A})$. 

To $C\in k^*$ we associate a $k$-torsor 
\begin{equation}\label{eq:Ctor}
\mathcal{R}_C=\Spec (k[T]/(CT^2-1))
\end{equation} 
under $\BZ/2$. By \emph{quadratic twist} of $A$ (resp. $\mathcal{A}$) by $C$, denoted by $A^C$ (resp. $\mathcal{A}^C$), we mean the diagonal quotient of $A\times_k \mathcal{R}_C$ (resp. $\mathcal{A}\times_k \mathcal{R}_C$) by $\BZ/2$, where $\BZ/2$ acts on $A$ (resp. $\mathcal{A}$) as $\iota_A$ (resp. $\iota_\mathcal{A}$). The projection $\mathcal{A}\times_k \mathcal{R}_C\to\mathcal{A}$ induces a morphism $\mathcal{A}^C\to \wKum(\mathcal{A})$. The antipodal involution $\iota_\mathcal{A}$ on $\mathcal{A}$ induces an involution $\iota_{\mathcal{A}^C}$ on $\mathcal{A}^C$, so that the quotient $$\pi_{\mathcal{A}^C}:\mathcal{A}^C\to \widetilde{\operatorname{Kum}}(\mathcal{A}^C):=\mathcal{A}^C/\langle\iota_{\mathcal{A}^C}\rangle$$ identifies $\wKum(\mathcal{A}^C)$ with $\wKum(\mathcal{A})$.
Since $A^C[2]\simeq_k A[2]$ canonically, we identify $\mathcal{A}^C$ as the torsor under $A^C$ defined by $[\mathcal{T}]\in \operatorname{H}^1(k,A[2])\simeq \operatorname{H}^1(k,A^C[2])$, equipped with a $2$-covering map $\Phi_C:\mathcal{A}^C\to A^C$. 

Since the action of the involution $\iota_{A^C}$ commutes with that of $A^C[2]$ on $A^C$, the $2$-covering map $\Phi_C$ commutes with the involutions on $A^C$ and $\mathcal{A}^C$: $$\Phi_C\circ\iota_{\mathcal{A}^C}=\iota_{A^C}\circ \Phi_C.$$
So $\Phi_C$ induces a covering map $\widetilde{\Phi}:\wKum(\mathcal{A})\to \wKum(A)$ between (singular) Kummer varieties.

\subsection{Hyperelliptic quartics}\label{se:Weierstrass}
Recall that a (smooth) hyperelliptic quartic has a smooth model as the intersection of two quadrics in $\BP^3$, whose affine plane model can be written as 
\begin{equation}\label{eq:hyperelliptic}
\mathcal{H}:Y^2=G(X)=aX^4+cX^2+dX+e,\quad a,c,d,e\in k,a\neq 0,
\end{equation}
such that $G$ has no multiple roots. The hyperelliptic involution $\iota_{\mathcal{H}}$ is given by $(X,Y)\mapsto(X,-Y)$.
Its geometric genus is $1$. 
It is classically known that the curve \eqref{eq:hyperelliptic} has the following $(\bar{k}/k)$-Weierstrass form (cf. \cite[Proposition 3.3.6]{Skorobogatov})
\begin{equation}\label{eq:Weierstrassmod}
\mathbb{E}:u^2=v^3-27Iv-27J,
\end{equation}
where
$$I=12ae+c^2,\quad J=72ace-27ad^2-2c^3,$$
are the cubic invariants of the polynomial $G$, and $\CH$ is a $k$-torsor under $\mathbb{E}$, whose class is the image of that of the zero-dimensional scheme $\mathcal{T}_{G}=\Spec(k[X]/(G(X)))$, as a torsor under the finite group scheme $\mathbb{E}[2]$. In particular, $\CH$ is equipped with a $2$-covering map $\Phi:\CH\to\mathbb{E}$. See \cite[Theorem 1 (ii)]{Connell} for explicit formulas.
If $G(X)=0$ has a root over $k$, then $\mathcal{T}_G\simeq_k\mathbb{E}[2]$, and by choosing this root as the origin, the isomorphism $\CH\simeq_k \mathbb{E}$ of elliptic curves arsing from the projection $(\mathbb{E}\times_k \mathcal{T}_G)/\mathbb{E}[2]\to\mathbb{E}$ commutes with the involutions on both sides
\footnote{If $\mathcal{H}(k)\neq \varnothing$, we have $\mathcal{H}\simeq_k \mathbb{E}$  by choosing some point as the origin. In general if this point is not a root of $G$, then the involution $\iota_\mathcal{H}$ does not commute with the group law.}.
For any $C\in k^*$, the quadratic twist $\mathcal{H}^C$ is defined by the affine equation $CY^2=G(X)$. As before, $[\CH^C]\in \operatorname{H}^1(k,\mathbb{E}^C)$ is induced by the $k$-torsor $\mathcal{T}_G$ under $\mathbb{E}^C[2]\simeq \mathbb{E}[2]$. In this way $\mathcal{H}^C$ is equipped with a $2$-covering map $\Phi_C:\mathcal{H}^C\to \mathbb{E}^C$ over $k$.



\subsection{Affine models of Kummer varieties associated to products of two hyperelliptic curves}\label{se:eqKum}
We fix two separable polynomials $f,g$. We assume that $\deg f,\deg g\in \{3,4\}$.
Define two hyperelliptic curves 
\begin{equation}\label{eq:twohyperellip}
\begin{split}
&E_1:y^2=g(x)\subset \mathbf{A}^2_{x,y};\\
&E_2:s^2=f(t)\subset \mathbf{A}^2_{s,t}.
\end{split}
\end{equation}
The quadratic twists of $E_1$ and $E_2$ by $C\in k^*$ have affine models
$$E_1^C:Cy^2=g(x),\quad E_2^C:Cs^2=f(t).$$
Their respective hyperelliptic involutions are
\begin{equation}\label{eq:ellipinvo}
\iota_{E_1^C}:(x,y)\mapsto (x,-y),\quad \iota_{E_2^C}:(t,s)\mapsto (t,-s).
\end{equation}

We now choose the affine model of $\Kum(E_1\times E_2)$ and the map $\pi_{E_1^C\times E_2^C}$ as follows. Let $\CS$ be defined by \eqref{eq:Kum}: \begin{equation}\label{eq:CSfamily}
\CS:f(T)Y^2=g(X)~\subset ~\mathbf{A}^3_{X,Y,T}
\end{equation}
Then outside the fixed point locus $(y=0) \times (s=0)$ of $\iota_{E_1^C}\times \iota_{E_2^C}$, the map
\begin{align}
\phi_C:E_1^C\times_k E_2^C&\dashrightarrow \mathcal{S},\label{eq:doublephi}\\
(x,y)\times(t,s)&\longmapsto (X,Y,T)=(x,\frac{y}{s},t)
\end{align}
is a (generically) double covering from the abelian surface $E_1^C\times E_2^C$ to $\mathcal{S}$.

When any one of the polynomials, say $f$, has degree $4$, we can suppose that $f=0$ has no root over $k$. Otherwise by the discussion in \S\ref{se:Weierstrass}, consider the elliptic curve $E_2^\prime$ defined by the cubic resolvent \eqref{eq:Weierstrassmod} of $f$ (which is of degree $3$). We have $E_2\simeq_k E_2^\prime$ commuting with involutions $\iota_{E_2}$ and $\iota_{E_2^\prime}$ and therefore $\Kum(E_1\times E_2)\simeq_k \Kum(E_1\times E_2^\prime)$.


\subsection{Parametrization of rational points}\label{se:parametrization}
With the affine model above, we prove the following folklore result, saying that rational points on (a Zariski open subset of) $\Kum(E_1\times E_2)$ are parametrized by those on a family of twisted abelian varieties.

The surface $\Kum(E_1\times E_2)$ has two genus one fibrations $\psi_X,\psi_T$ to $\BP^1$, that is, projections to $X$ and $T$ coordinates from the affine model $\CS$ \eqref{eq:CSfamily} in $\S$\ref{se:eqKum}. They are respectively induced by the projections $E_1\times E_2\to E_1 ,E_1\times E_2\to E_2$. Let $F_1$ (resp. $F_2$) be the image of $E_1[2]$ (resp. $E_2[2]$) in $\BP^1$. Then $\psi_X^{-1}(F_1)$ (resp. $\psi_T^{-1}(F_2)$) are four disjoint rational curves which are (geometrically) sections of $\psi_X$ (resp. $\psi_T$). 
Let 
 $$V=\Kum(E_1\times E_2)\setminus (\psi_X^{-1}(F_1) \cup\psi_T^{-1}(F_2)).$$ We identify $V$ with the open set $\CS\setminus(Y=0,\infty)$.
For each $[C]\in k^*/k^{*2}$, consider the open subset of $E_1^C\times E_2^C$:
\begin{equation}\label{eq:Uc}
U_C=(E_1^C\times E_2^C)\setminus \left((E_1^C[2]\times E_2^C)\cup (E_1^C\times E_2^C[2])\right).
\end{equation} 
\begin{theorem}\label{thm:Z2torsors}
We have:
	\begin{enumerate}[label=(\roman*)]
		\item  $(U_C)_{[C]\in\kmodksq}$ is a family of $\BZ/2$-torsors over $V$;
		\item  	$$V(k)=\bigsqcup_{[C]\in\kmodksq} \pi_{E_1^C\times E_2^C}(U_C(k)).$$
	\end{enumerate}

\end{theorem}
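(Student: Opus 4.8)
The plan is to exhibit $\phi_C$ as the quotient of $E_1^C\times E_2^C$ by the free involution $\varrho_1\times\varrho_2$, to check that the divisors excised from $\Kum(E_1\times E_2)$ and from $E_1^C\times E_2^C$ match up, and then to observe that the single class $[f(T_0)]\in\kmodksq$ attached to a point records which torsor it lifts to.

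For part (i) I would first record that $\iota:=\varrho_1\times\varrho_2$ acts by $(x,y,t,s)\mapsto(x,-y,t,-s)$ and that $\phi_C$ is $\iota$-invariant, since $\phi_C(x,-y,t,-s)=(x,\tfrac{-y}{-s},t)=\phi_C(x,y,t,s)$. The fixed locus of $\iota$ is $(y=s=0)$, i.e.\ $\mathrm{Fix}(\varrho_1)\times\mathrm{Fix}(\varrho_2)$, which sits inside the removed locus $(E_1^C[2]\times E_2^C)\cup(E_1^C\times E_2^C[2])$; hence $\iota$ acts freely on $U_C$. Because the sixteen nodes of $\wKum(E_1\times E_2)$ are the images of $\mathrm{Fix}(\varrho_1)\times\mathrm{Fix}(\varrho_2)$ and their $X$-coordinates lie in $F_1$, the open set $V$ avoids the singular locus, so $\Kum(E_1\times E_2)$ and $\wKum(E_1\times E_2)$ agree over $V$ and I may compute with the affine model $\CS\setminus(Y=0,\infty)$. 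The essential verification here is $\phi_C^{-1}(V)=U_C$: excising $E_1^C[2]\times E_2^C$ forces $y\neq 0$ (so $X\notin F_1$ and $Y\neq 0$), while excising $E_1^C\times E_2^C[2]$ forces $s\neq 0$ (so $T\notin F_2$ and $Y\neq\infty$).

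Next I would show $\phi_C\colon U_C\to V$ is a $\BZ/2$-torsor. Étaleness is automatic away from the ramification of $E_1^C\times E_2^C\to\wKum(E_1\times E_2)$, which is exactly the fixed locus we removed. A direct fibre computation shows that $\phi_C(x,y,t,s)=\phi_C(x',y',t',s')$ forces $x=x'$, $t=t'$, $y'=\pm y$, $s'=\pm s$ and $y/s=y'/s'$, so the fibre is precisely the orbit $\{(x,y,t,s),(x,-y,t,-s)\}$; over $\bar k$ every fibre therefore consists of two points forming a single orbit, giving a free étale degree-two cover, that is, a $\BZ/2$-torsor. As $[C]$ ranges over $\kmodksq\simeq H^1(k,\BZ/2)$ the $U_C$ are the quadratic twists of $U_1$, which is the asserted family.

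For part (ii) the governing invariant is the class $[f(T_0)]=[g(X_0)]\in\kmodksq$ attached to a point $(X_0,Y_0,T_0)\in V(k)$, the equality holding because $g(X_0)=f(T_0)Y_0^2$ on $\CS$. For surjectivity, given such a point I set $C:=f(T_0)\in k^*$ (nonzero since $T_0\notin F_2$); then $(X_0,Y_0,T_0,1)$ is a $k$-point of $E_1^C\times E_2^C$ lying in $U_C$, since $X_0\notin(g=0)$ and $T_0\notin(f=0)$, and $\phi_C$ sends it to $(X_0,Y_0,T_0)$. For disjointness, any preimage $(X_0,y,T_0,s)\in U_C(k)$ satisfies $Cs^2=f(T_0)$, whence $[C]=[f(T_0)]$ is forced, so distinct classes have disjoint images and the union is a disjoint union. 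The step I expect to demand the most care is the global identification $\phi_C^{-1}(V)=U_C$ together with the claim that $V$ meets neither the Kummer singularities nor the points at infinity of $E_1^C\times E_2^C$, in particular the bookkeeping at infinity in the quartic case, where the curve sits as an intersection of two quadrics and the fixed locus of the hyperelliptic involution must be matched with $E_i^C[2]$. Once that identification is secured, both the torsor structure in (i) and the $\kmodksq$-indexed decomposition in (ii) follow from the elementary orbit and twist computations above.
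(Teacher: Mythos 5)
Your proof is correct, but it takes a genuinely different route from the paper's. For (i), where you verify by hand that $\phi_C$ restricted to $U_C$ is the quotient by the free involution $\varrho_1\times\varrho_2$ (fixed locus excised, fibres are single orbits, hence a finite \'etale double cover), the paper instead invokes the general machinery of Colliot-Th\'el\`ene--Skorobogatov--Swinnerton-Dyer on double fibres and double covers: it observes that $\psi_T^{-1}(F_2)$ is a double fibre whose defining function $f$ is invertible on $V$, and cites their Proposition 1.1 and Theorem 2.1. For (ii), where you argue by explicit lifting (setting $C:=f(T_0)$ and noting that any preimage in $U_C(k)$ forces $[C]=[f(T_0)]$), the paper uses the descent formalism: the evaluation map $V(k)\to \hetale(k,\BZ/2)=\kmodksq$, followed by an identification, via the Kummer sequence and the change of variable $y=sw$, of the twist of $U_1$ by $[C]$ with $U_C$. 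The two arguments encode the same invariant --- your class $[f(T_0)]=[g(X_0)]$ is exactly the paper's evaluation map --- but your version is self-contained and makes the partition completely explicit, at the price of asserting rather than proving that the $U_C$ exhaust the quadratic twists of $U_1$ (harmless here, since your direct proof of (ii) never uses that identification, whereas in the paper it is the step that makes the descent formalism apply). Conversely, the paper's route is shorter and places the construction inside the standard torsor/double-fibre framework, which is what it builds on later. One point you flag yourself, the bookkeeping over $T=\infty$ (resp.\ $X=\infty$) in the quartic case, is treated no more carefully in the paper, which simply identifies $V$ with the affine model $\CS\setminus(Y=0,\infty)$; so this is a shared imprecision rather than a gap specific to your argument.
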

\begin{proof}
	The restricted morphism $\pi_{E_1^C\times E_2^C}|_{U_C}:U_C\to V$ is clearly flat and étale.
	Moreover, the divisor $\psi_T^{-1}(F_2)$ is a \emph{double fibre} on $V$ (with respect to the fibration $\psi_T$) (cf. \cite[\S2 p. 117]{CT-Sk-SD}). The function defining $\psi_T^{-1}(F_2)$ (an affine equation is given by $f$) is invertible on $V$. Applying \cite[Proposition 1.1 \& Theorem 2.1]{CT-Sk-SD} shows (i). The decomposition of rational points in (ii) now follows from the evaluation map
\begin{equation*}
	V(k)\longrightarrow \hetale(k,\BZ/2)=k^*/k^{*2}.
\end{equation*}
Indeed, for any $[C]\in\kmodksq$, the Kummer exact sequence twisted by $[C]$ gives
$$\xymatrix{1\ar[r]&\mathcal{R}_C\ar[r] &\mathbb{G}_{\operatorname{m}}\ar[r]^{\xi_C} &\mathbb{G}_{\operatorname{m}}\ar[r]& 1},$$
where $\mathcal{R}_C$ is defined by \eqref{eq:Ctor} and $\xi_C:\mathbb{G}_{\operatorname{m}}\to \mathbb{G}_{\operatorname{m}}$ is $t\mapsto Ct^2$.
The $\BZ/2$-torsor with class $[U_1]-[C]\in\hetale(V,\BZ/2)$ is isomorphic to $V\times_{\mathbb{G}_{\operatorname{m}},\xi_C} \mathbb{G}_{\operatorname{m}}$ where $V\to \mathbb{G}_{\operatorname{m}}$ given by the invertible function $f$. An affine model is then defined by (an open subset of) $$f(t)=Cs^2,\quad f(t)w^2=g(x).$$
And it is clearly isomorphic to $U_C$, under the change of variables $y=sw$.
\end{proof}
\begin{remark}\label{rmk:fibres}
	The proof above shows that, if we view the surface \eqref{eq:CSfamily} as a family of quadratic twists of $E_1$ by values of the polynomial $f(T)$, the fibres of $\psi_T$ which are isomorphic to $E_1^C$ for certain $[C]\in\kmodksq$ are parametrized by rational points on the curve $f(t)=Cs^2$, namely $E_2^C$.
\end{remark}

\section{The condition \eqref{eq:bothrankpos}}\label{se:ICproperty}


In this section we fix $k$ a number field, and $f,g$ two polynomials without repeated roots satisfying $3\leqslant \deg f,\deg g\leqslant 4$ and defining two hyperelliptic curves $E_1,E_2$ \eqref{eq:twohyperellip}.

The goal of this section is firstly to show Theorem \ref{thm:HP} (cf. $\S$\ref{se:proofSPR}). Secondly, we discuss various geometric and analytic methods towards the condition \eqref{eq:bothrankpos} (cf. \S\ref{se:simulnonvan}).


\subsection{Proof of Theorem \ref{thm:HP}}\label{se:proofSPR}

We begin with a basic observation towards the condition \eqref{eq:bothrankpos}. 
\begin{lemma}\label{le:Zdense}
	For any $[C]\in\kmodksq$, the condition \eqref{eq:bothrankpos} holds if and only if $(E_1^C\times E_2^C)(k)$ is Zariski dense.
\end{lemma}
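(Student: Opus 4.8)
The plan is to prove the equivalence for a fixed class $[C]\in\kmodksq$ by relating the Mordell-Weil ranks of the two elliptic curves to the Zariski density of the product $E_1^C\times E_2^C$. Since the abelian surface $E_1^C\times E_2^C$ is a product, its geometry decomposes along the two factors, and I expect the Zariski closure of a set of rational points to be controlled factorwise. First I would recall that the group of rational points on an abelian variety $A$ over $k$ is, by the Mordell-Weil theorem, a finitely generated group, and that the Zariski closure of $A(k)$ is a (not necessarily connected) algebraic subgroup of $A$. In particular $A(k)$ is Zariski dense in $A$ if and only if the identity component of this closure is all of $A$, equivalently if and only if $A(k)$ is not contained in a finite union of proper translated abelian subvarieties.

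For the product $A=E_1^C\times E_2^C$, I would analyze the proper algebraic subgroups. Because each $E_i^C$ is an elliptic curve (a one-dimensional abelian variety, hence with no proper positive-dimensional subgroups other than finite ones, assuming the two factors are non-isogenous — and even if isogenous, the relevant point is that a translate of a proper connected subgroup projects to a finite set in at least one factor), the key dichotomy is the following. If $\rank(E_1^C(k))=0$, then $E_1^C(k)$ is finite (only torsion), so every rational point of the product has its first coordinate lying in a finite set; hence $(E_1^C\times E_2^C)(k)$ is contained in finitely many fibres $\{P\}\times E_2^C$ and cannot be Zariski dense. The symmetric statement holds if $\rank(E_2^C(k))=0$. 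This establishes the contrapositive of one direction: if \eqref{eq:bothrankpos} fails, then the product's rational points are not dense.

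Conversely, suppose \eqref{eq:bothrankpos} holds, so both ranks are positive. Then each $E_i^C(k)$ is infinite and hence Zariski dense in the curve $E_i^C$ (an infinite subset of a curve is dense, since the curve is irreducible of dimension one). I would then invoke the fact that if $S_1\subset E_1^C$ and $S_2\subset E_2^C$ are each Zariski dense, the product set $S_1\times S_2\subset E_1^C\times E_2^C$ is Zariski dense in the product: any closed subset containing $S_1\times S_2$ must, upon intersecting with each horizontal and vertical slice, contain a dense subset of a curve and therefore the whole slice, forcing it to be the entire surface. Since $(E_1^C\times E_2^C)(k)\supseteq E_1^C(k)\times E_2^C(k)$, density follows.

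The main technical point — and the step I would be most careful about — is the density of the product set, together with the nonemptiness caveat built into the convention (the notation $\rank$ presupposes $E_i(k)\neq\varnothing$, so each curve has a rational base point and a group structure, making ``rank'' and ``infinite'' genuinely equivalent). I do not anticipate a deep obstacle here, as the argument is essentially the standard fact that on a product variety, the product of two dense subsets is dense; the only thing to verify cleanly is that positive rank on an elliptic curve with a rational point is equivalent to having infinitely many rational points, which is immediate from finite generation of $E_i^C(k)$. I would phrase the whole lemma as a chain of equivalences: positive rank on both factors $\iff$ each factor has infinitely many (hence Zariski dense) rational points $\iff$ the product of rational points is Zariski dense $\iff$ $(E_1^C\times E_2^C)(k)$ is Zariski dense.
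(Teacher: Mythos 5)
Your proof is correct, but it takes a genuinely different and more elementary route than the paper's. Both arguments treat the direction ``density $\Rightarrow$ \eqref{eq:bothrankpos}'' the same way (the paper declares it clear; you spell out the contrapositive: rank zero or emptiness on one factor traps all rational points in finitely many fibres). The difference is in the hard direction, \eqref{eq:bothrankpos} $\Rightarrow$ density. The paper argues by classifying the integral curves $D\subset E_1^C\times E_2^C$ that could contain the rational points: any such $D$ has genus $\geqslant 1$; if $g(D)=1$ it is either a fibre of one of the projections or maps to both factors by isogeny-type \'etale maps, in which case its rational points have rank equal to $\rank(E_1^C(k))=\rank(E_2^C(k))$ and hence sit with infinite index inside $E_1^C(k)\times E_2^C(k)$, whose rank is the sum of the two; if $g(D)\geqslant 2$, Faltings' theorem makes $D(k)$ finite. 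You replace all of this by the observation that positive rank plus Mordell--Weil makes each $E_i^C(k)$ infinite, hence Zariski dense in an irreducible curve, and that a product of dense subsets is dense in the product (the slice argument you sketch is exactly right). This avoids Faltings' theorem and the geometry of curves on abelian surfaces altogether; what the paper's heavier approach buys is finer information about \emph{which} curves can carry infinitely many rational points (fibres and isogeny translates), a theme that recurs in the proofs of Lemma \ref{le:simultaneoustwist} and Corollary \ref{th:jump}, whereas your approach buys brevity and self-containedness. Two small remarks: since a rational point of a product is exactly a pair of rational points, one has $(E_1^C\times E_2^C)(k)=E_1^C(k)\times E_2^C(k)$, so your containment is in fact an equality; and the parenthetical claim in your second paragraph that a translate of a proper connected subgroup ``projects to a finite set in at least one factor'' is false when the factors are isogenous (the graph of an isogeny surjects onto both) --- but your actual argument never uses that claim, so the proof is unaffected.
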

\begin{proof}
	This follows from the general fact that for an elliptic surface $\pi:\CE\to B$, where $B$ is $\BP^1$ or an elliptic curve, $\CE(k)$ is Zariski dense if and only if infinitely many fibres have positive rank. A proof goes as follows. 	Take $D\subset \CE$ to be any integral curve. Then $\pi|_D$ is either constant or surjective . In the former case $D$ is contained in a fibre of $\pi$. For the latter case, $D$ intersects transversally with the generic fibre of $\pi$, and therefore it intersects transversally with all but finitely many fibres of $\pi$. So if $\rank(\CE_t(k))>0$ for infinitely many $t\in B(k)$, then $\CE(k)$ cannot not be contained in any finite union of integral curves, i.e., $\CE(k)$ is Zariski dense. Conversely, a standard application of Merel's uniformity theorem \cite[Corollaire]{Merel} shows that every point of $\CE(k)$ which is a torsion point of $\CE_t(k)$ for certain $t\in B(k)$ lies in a proper Zariski closed subset of $\CE$. Therefore if all but finitely many fibres have only torsion points, $\CE(k)$ cannot be Zariski dense.
	 
	Using the fact above, the statement of the lemma follows from considering the projections $ E_1^C\times E_2^C\to E_1^C$ and $ E_1^C\times E_2^C\to E_2^C$. 
\end{proof}
The next finiteness result is a slight generalization of an observation due to Rohrlich \cite[\S8 Lemma, p. 147]{Rohrlich}, who proved the statement over $\BQ$.
\begin{lemma}\label{le:Rohrlich}
	Let $E$ be an elliptic curve over a number field $k$ with equation $v^2=h(u)$,
	where $h(u)\in k[u]$ is separable of degree $3$.
	We fix the equation for $E^C$ the quadratic twist by $C\in k^\times$ to be $Cv^2=h(u)$.
	Then 
	$$\#\{u\in k:\exists [C]\in \kmodksq,\exists v\in k,(u,v)\in E^C(k)_{\tor}\}<\infty.$$
\end{lemma}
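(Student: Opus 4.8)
The plan is to reinterpret the set as the locus of rational $u$ at which a single explicit section of an isotrivial elliptic surface specializes to a torsion point, and then to prove finiteness by combining the infinitude of the order of that generic section with Merel's uniform boundedness theorem.

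First I would dispose of the values with $h(u)=0$: these are the finitely many roots of $h$, giving the $2$-torsion points $(u,0)$. For the remaining $u$ with $h(u)\neq 0$, the relation $Cv^2=h(u)$ forces $[C]=[h(u)]\in\kmodksq$, so there is a \emph{unique} relevant twist, namely $C=h(u)$, and the only candidate is $(u,1)\in E^{h(u)}(k)$ (any other admissible $v$ changes $C$ only by a square). Hence, up to a finite set, the quantity to bound is
\[
\#\{u\in k:\ (u,1)\in E^{h(u)}(k)_{\tor}\}.
\]
I would then package these twists into the elliptic surface $\mathcal{S}:\ h(T)v^2=h(X)$ over $\BP^1_T$, whose fibre over $T=u$ is $E^{h(u)}$, together with the diagonal section $P:\ (X,v)=(T,1)$ (indeed $h(T)\cdot 1=h(T)$). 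The set above becomes $\{u\in\BP^1(k):P_u\in(\mathcal{S}_u)_{\tor}\}$.

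The core step is to show that $P$ has \emph{infinite order} in the generic fibre $\mathcal{S}_\eta$ over $K=k(T)$. Since a quadratic twist preserves the $j$-invariant, $\mathcal{S}$ is isotrivial: every fibre is a twist of the fixed curve $E_0:\ V^2=h(X)$ over $k$. This twist is trivialised over $L=k(T)(\sqrt{h(T)})=k(\mathcal{C})$, the function field of the genus-one curve $\mathcal{C}:\ s^2=h(T)$ (here $h(T)$ is a non-square in $K$, as $h$ is separable of odd degree). Under the trivialising isomorphism $\mathcal{S}\times_K L\simeq E_0\times_k L$, given by $(X,v)\mapsto(X,\,v\sqrt{h(T)})$, the section $P$ maps to $(T,s)\in E_0(L)$. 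But $(T,s)$, viewed as an $L$-point of $E_0$, is precisely the \emph{tautological isomorphism} $\mathcal{C}\xrightarrow{\ \sim\ }E_0$ (the two curves share the same equation after renaming $(T,s)\leftrightarrow(X,V)$). As a non-constant morphism of curves it represents a non-torsion element of $E_0(k(\mathcal{C}))$, so $P$ is of infinite order in $\mathcal{S}_\eta(K)$.

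Finally I would conclude finiteness. By Merel's uniform boundedness theorem there is a constant $N=N([k:\BQ])$ bounding the order of the torsion subgroup of every elliptic curve over $k$, hence of every $E^{h(u)}(k)$; so if $(u,1)$ is torsion then $[n]P_u=O$ for some $n\leqslant N$. For each such $n$, the infinite order of $P$ gives $[n]P\neq O$ in $\mathcal{S}_\eta$, so $[n]P$ and $O$ are distinct sections and the closed condition $[n]P_u=O$ holds for only finitely many $u\in\BP^1(k)$. Taking the union over the finitely many $n\leqslant N$ yields the claim. The \textbf{main obstacle} is exactly the generic non-torsion step, and the delicate point is the isotriviality of $\mathcal{S}$: the geometric canonical height of $P$ on $\mathcal{S}\times_k\bar{k}$ vanishes, so Silverman's specialization theorem is unavailable and the torsion specializations cannot be controlled by a height-growth estimate; this is precisely why Merel's uniform bound, rather than a height argument, is the right tool.
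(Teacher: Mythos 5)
Your proof is correct, but it takes a genuinely different route from the paper's. The paper argues representation-theoretically: after disposing of $2$-torsion (whose $u$-coordinates are roots of $h$), it shows that for each fixed $N\geqslant 3$ at most one class $[C]\in\kmodksq$ can have $E^C[N](k)\neq\varnothing$ --- twisting multiplies the mod $N$ Galois representation by the quadratic character $\psi_C$, which is incompatible with the shape \eqref{eq:galoisrep} forced by a rational point of order $N$ unless $[C]=[1]$ --- and then invokes Merel's bound to cap $N$, concluding that only finitely many twist classes carry any rational torsion of order $>2$, each contributing finitely many $u$'s. You instead exploit the specific structure of the set: the twist class is forced to equal $[h(u)]$ and the candidate point is $(u,\pm 1)$, which you realize as the specialization of the diagonal section $P=(T,1)$ of the isotrivial surface $h(T)v^2=h(X)$; your key step is that $P$ is generically non-torsion, proved by base-changing to $k(E)$ where $P$ becomes the tautological non-constant morphism from $E$ to the constant curve $v^2=h(X)$ (here you are implicitly using the standard fact that a torsion element of the Mordell--Weil group of a constant elliptic curve over a function field is a constant map into a finite torsion subgroup, hence constant --- worth stating, but unproblematic). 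Merel then caps the possible orders, and each nonzero section $[n]P$ meets the zero section over only finitely many $u$. Both proofs lean on Merel at exactly the same spot; this is precisely how the paper extends Rohrlich's original statement from $\BQ$ to arbitrary number fields. What each buys: the paper's argument yields a stronger intermediate fact --- finiteness of the set of twist classes possessing any torsion beyond $2$-torsion, with no reference to a fibration --- whereas yours avoids Galois representations entirely, adapts immediately to torsion specializations of other sections of isotrivial families, and correctly isolates why a uniform torsion bound, rather than a height-growth/specialization argument (unavailable here because isotriviality kills the geometric height of $P$), is the essential input.
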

\begin{proof}
	We make use of Merel's uniformity theorem \cite{Merel} to extend the statement to arbitrary number fields. To simplify the reading we give details. 
	
	Since $E[2]\simeq E^C[2]$ for any $[C]\in\kmodksq$, the $u$-coordinates appearing in any $2$-torsion point different from the one at infinity of $E^C$ for some $[C]$ satisfy $h(u)=0$. 
	
	We claim that, for any fixed integer $N\geqslant 3$, if $E[N](k)\neq\{0_E\}$, then for any $[C]\in\kmodksq, [C]\neq [1]$, one has $E^C[N](k)=\{0_E\}$. Indeed, by choosing $P\in E[N](k)$ together with $0_E$ as a basis of $E[N]$, the image of the mod $N$ Galois representation $\varrho_{E,N}:\operatorname{Gal}(\bar{k}/k)\to \operatorname{GL}_2(\BZ/N)$ attached to $E$ is \begin{equation}\label{eq:galoisrep}
	\begin{pmatrix}
	1 & *  \\
	0 & \chi_N  \\
	\end{pmatrix},
	\end{equation}
	where $\chi_N$ is the mod $N$ cyclotomic character. And that of $\varrho_{E^C,N}$ attached to $E^C$ is conjugate inside $\operatorname{GL}_2(\BZ/N)$ to
	\begin{equation}\label{eq:galoisrep2}
	\mu_C\cdot\begin{pmatrix}
	1 & *  \\
	0 & \chi_N  \\
	\end{pmatrix},
	\end{equation}
	 where $\mu_C:\operatorname{Gal}(\bar{k}/k)\to\{\pm 1\}$ is the quadratic character associated to the extension $k(\sqrt{C})/k$. We see that elements of the form \eqref{eq:galoisrep2} cannot be conjugate to \eqref{eq:galoisrep}, unless $[C]=[1]$. This proves our claim.
	
	Finally, by \cite[Corollaire]{Merel}, there exists a constant $B=B(k)$ such that for every elliptic curve $\mathfrak{E}$ defined over $k$, every point in $\mathfrak{E}(k)_{\operatorname{tor}}$ has order $<B$. We have therefore proved that there are at most finitely many classes $[C]\in k^*/k^{*2}$ such that $E^C$ has a non-zero torsion point over $k$ of order $>2$. Therefore the collection of all possible $u$-coordinates of points of order $>2$ in $E^C(k)_{\operatorname{tor}}$ for all $[C]\in k^*/k^{*2}$ is finite. This finishes the proof.
\end{proof}
We now deduce the following, which may be seen as a quantitative version of \cite[Lemma 3.2]{Demeio}.
Recall \eqref{eq:Uc} the open set $U_C\subset E_1^C\times E_2^C$.
\begin{lemma}\label{le:simultaneoustwist}
	The union $$\bigcup_{\substack{[C]\in\kmodksq\\\eqref{eq:bothrankpos} \text{ does not hold}}}\pi_{E_1^C\times E_2^C}(U_C(k))$$ 
	is contained in a proper Zariski closed subset of $\Kum(E_1\times E_2)$.
\end{lemma}
\begin{proof}
	We first suppose that $\deg f=\deg g=3$. Then for any $[C]\in\kmodksq$ such that \eqref{eq:bothrankpos} does not hold, at least one of the groups $E_1^C(k),E_2^C(k)$ contains only torsion points by Lemma \ref{le:Zdense}. According to the equation of affine model $\CS$ \eqref{eq:CSfamily} and the expression of the map $\phi_C$ \eqref{eq:doublephi}, on applying Lemma \ref{le:Rohrlich} to both curves $E_1,E_2$, we conclude that there exists a Zariski closed subset $Z$ of $\mathcal{S}$ consisting of points $(X,Y,T)\in\CS(k)$ with finitely many possible choices of $X$-coordinates or $T$-coordinates, so that 
	$$\phi_C(U_C(k))\subset Z\cup\left( \left((Y=0)\cup (Y=\infty)\right)\cap \CS\right),$$
	the right-hand-side being a proper closed subset of $\CS$.
	
	Next, without loss of generality, let us assume $\deg f=4,\deg g=3$. Let $E_2^\prime$ be the elliptic curve defined by the cubic resolvent \eqref{eq:Weierstrassmod} of $f$ (cf. $\S$\ref{se:Weierstrass}). 
	Fix $C\in k^*$. If $E_2^C(k)=\varnothing$ then $(E_1^C\times E_2^C)(k)=\varnothing$. So it suffices to consider the case where $E_2^C(k)\neq \varnothing$, 
	and that the condition \eqref{eq:bothrankpos} holds for $\Kum(E_1\times E_2)$ amounts to saying it holds for $\Kum(E_1\times E_2^\prime)$. Let us consider the commutative diagram (the maps $\pi_{E_1^C\times E_2^C}$, $\pi_{E_1^C\times (E_2^\prime)^C}$ and $\widetilde{\Phi}$ are constructed in \S\ref{se:Kummervar} and $\S$\ref{se:eqKum})
	\begin{equation}\label{eq:coveringkum}
		\xymatrix{&E_1^C\times E_2^C \ar@{->}[r]^{\operatorname{Id}_{E_1^C}\times \Phi_C} \ar@{->}[d]^{\pi_{E_1^C\times E_2^C}} &E_1^C\times (E_2^\prime)^C\ar@{->}[d]^{\pi_{E_1^C\times (E_2^\prime)^C}}\\
		&\wKum(E_1\times E_2)\ar@{->}[r]^{\widetilde{\Phi}} &\wKum(E_1\times E_2^\prime),}
	\end{equation}
	where $\Phi_C:E_2^C\to (E_2^\prime)^C$ is the $2$-covering map (cf. \S\ref{se:Weierstrass}).
	By the argument in the preceding paragraph, we know that, by Lemma \ref{le:Zdense},
	$$\bigcup_{\substack{[C]\in\kmodksq\\(E_1^C\times (E_2^\prime)^C)(k) \text{ is not Zariski dense}}}\pi_{E_1^C\times (E_2^\prime)^C}(U_C^\prime(k)),$$
	where $U_C^\prime=(E_1^C\times (E_2^\prime)^C)\setminus ((E_1^C[2]\times (E_2^\prime)^C) \cup (E_1^C\times (E_2^\prime)^C[2]))$, is contained in a proper closed subset $W^\prime\subset\wKum(E_1\times E_2^\prime)$. Therefore 
	$$\bigcup_{\substack{[C]\in\kmodksq\\(E_1^C\times E_2^C)(k) \text{ is not Zariski dense}}}\pi_{E_1^C\times E_2^C}(U_C(k))\subset W:=\widetilde{\Phi}^{-1}(W^\prime),$$ where $W\subset \Kum(E_1\times E_2)$ is also proper Zariski closed.
	
	Finally, the case where $\deg f=\deg g=4$ is reduced to the above one, by considering the $2$-covering map between $E_1$ and the elliptic curve $E_1^\prime$ defined by the cubic resolvent \eqref{eq:Weierstrassmod} of $g$ and the covering map between $\wKum(E_1\times E_2)$ and $\wKum(E_1^\prime\times E_2^\prime)$.
\end{proof}
\begin{remark}
	Our proof above shows that, on the open subset $U=\Kum(E_1\times E_2)\setminus W$, for any $P\in U(k)$ (which might be empty), the fibres above $\psi_X(P)$ and $\psi_T(P)$ containing $P$ both have positive Mordell-Weil rank.
\end{remark}
\begin{proof}[Proof of Theorem \ref{thm:HP}]
	It is clear that both Zariski density and (HP) do not depend on the choice of birational models. 
	By Theorem \ref{thm:Z2torsors}, outside a proper Zariski closed subset, rational points on $\Kum(E_1\times E_2)$ are parametrized by the family $(U_C(k))_{[C]\in k^*/k^{*2}}$. Now (i) follows immediately from Lemmas \ref{le:Zdense} and \ref{le:simultaneoustwist}.
	 If $\Kum(E_1\times E_2)$ verifies (HP) over $k$, then $\Kum(E_1\times E_2)(k)$ is not dominated by rational points on any finite collection of  twisted abelian surfaces $E_1^C\times E_2^C$. On applying again Lemma \ref{le:simultaneoustwist}, the number of $[C]\in\kmodksq$ such that the condition \eqref{eq:bothrankpos} holds is necessarily infinite.
\end{proof}

\subsection{Remarks on simultaneous non-vanishing}\label{se:simulnonvan}
Let $j_{E_1}$ (resp. $j_{E_2}$) be the $j$-invariant of $E_1$ (resp. $E_2$). The following result had been discovered by Kuwata-Wang \cite[Theorem 5]{K-W} over $\BQ$ by making use of the existence of a rational curve on $\Kum(E_1\times E_2)$. We present a different proof based on Theorem \ref{thm:HP} and work of Demeio \cite{Demeio}.
\begin{corollary}\label{thm:degree3Kummer}
	Assume that $\deg f=\deg g=3$ and that $(j_{E_1},j_{E_2})\neq(0,0)$ or $(1728,1728)$. Then \eqref{eq:bothrankpos} holds for $\Kum(E_1\times E_2)$ with infinitely many classes $[C]\in\kmodksq$.
\end{corollary}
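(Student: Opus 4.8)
The plan is to apply Theorem \ref{thm:HP}(ii), which reduces the existence of infinitely many classes $[C]$ satisfying \eqref{eq:bothrankpos} to establishing that $\Kum(E_1\times E_2)$ verifies the Hilbert property over $k$. When $\deg f=\deg g=3$, the surface $\CS$ defined by \eqref{eq:CSfamily} is an untwisted Kummer surface, and as remarked in the introduction it carries a \emph{third} elliptic fibration, namely in the coordinate $Y$, in addition to $\psi_X$ and $\psi_T$. This extra fibration is precisely the ingredient exploited by Kuwata--Wang and by Demeio, and it is what makes the case of two cubics genuinely accessible by pure geometry, without any input from analytic number theory. So the first step is to make this third fibration explicit over $k$ and identify its generic fibre as a smooth genus one curve whose Jacobian is controlled by the invariants of $E_1$ and $E_2$.

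The second step is to verify that this third fibration has positive generic Mordell-Weil rank, or more precisely that it provides enough independent multisections defined over $k$ to produce a non-thin set of rational points. The hypothesis $(j_{E_1},j_{E_2})\neq(0,0),(1728,1728)$ is exactly the condition under which the relevant genus one fibration is \emph{not} itself isotrivial with extra automorphisms that would force the rank to vanish or the Kummer surface to degenerate; it guarantees that the classical Kuwata--Wang construction yields a section (or a curve of low degree) of infinite order. I would invoke the explicit rational or elliptic curve on $\Kum(E_1\times E_2)$ from \cite{K-W} to exhibit a dominant map from a variety with Zariski-dense (indeed non-thin) rational points, thereby showing $\CS(k)$ is not thin, i.e. that (HP) holds.

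Concretely, once (HP) is in hand, Theorem \ref{thm:HP}(ii) immediately delivers infinitely many $[C]\in\kmodksq$ with \eqref{eq:bothrankpos}, completing the proof. The main obstacle I anticipate lies in the second step: merely producing the third fibration is routine, but certifying that it contributes genuinely non-torsion, non-thin rational points requires ruling out the degenerate $j$-invariant configurations, and checking that the rational curve supplied by \cite{K-W} is not contained in the bad locus $\psi_X^{-1}(F_1)\cup\psi_T^{-1}(F_2)$ nor mapped into a thin set. A cleaner route, which I would pursue in parallel, is to cite Demeio's theorem \cite{Demeio} that for two cubics Zariski density is \emph{equivalent} to (HP); then it suffices to establish Zariski density, and by Theorem \ref{thm:HP}(i) this reduces to finding a \emph{single} $[C]$ with \eqref{eq:bothrankpos}. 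Producing one such twist with both ranks positive under the stated $j$-invariant hypothesis is the crux, and I expect to obtain it from the explicit curve of \cite{K-W} by pulling back along $\psi_X$ and $\psi_T$ to force simultaneous positive rank on a common quadratic twist.
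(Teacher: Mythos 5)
Your skeleton --- Kuwata--Wang's third fibration in the $Y$-coordinate, Demeio's theorem, and then Theorem \ref{thm:HP}(ii) --- is the same as the paper's, but both of the routes you sketch have a gap at the decisive step. In your primary route you claim that the third fibration ``provides enough independent multisections defined over $k$ to produce a non-thin set of rational points,'' i.e.\ that (HP) can be read off directly from the Kuwata--Wang construction. This is unjustified: a non-torsion section (or an explicit rational curve) yields, at best, Zariski density of $\Kum(E_1\times E_2)(k)$, and the implication ``Zariski dense $\Rightarrow$ (HP)'' is precisely Conjecture \ref{conj:CZ}, open in general. For these Kummer surfaces that implication \emph{is} Demeio's theorem, whose proof is a substantial piece of work using all three elliptic fibrations; you cannot shortcut it by pointing at multisections, since the union of the multiples of a section is a countable union of curves and there is no a priori reason it is not thin.

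Your ``cleaner route pursued in parallel'' is the one the paper actually takes, except at the Zariski density step, which you leave as ``the crux.'' The paper does \emph{not} obtain density by producing a single $[C]\in\kmodksq$ with \eqref{eq:bothrankpos} and invoking Theorem \ref{thm:HP}(i); instead it performs the base change $\zeta:\BP^1_k\to\BP^1_k$, $Y\mapsto Y^6$, extracts the explicit non-torsion section $\sigma_1$ of the resulting elliptic fibration $\CS\times_{\zeta,\BP^1_k}\BP^1_k\to\BP^1_k$ from \cite[p.~116]{K-W} (this is where the hypothesis $(j_{E_1},j_{E_2})\neq(0,0),(1728,1728)$ enters, via \cite[Theorem 1]{K-W}), and applies Silverman's specialization theorem \cite{Silverman2} to conclude that almost all fibres of the $Y$-fibration have positive Mordell--Weil rank, whence density over any number field $k$. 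Then Demeio's theorem \cite[Proposition 4.4]{Demeio} upgrades density to (HP), and Theorem \ref{thm:HP}(ii) finishes. Your alternative plan --- pulling the Kuwata--Wang curve back along $\psi_X$ and $\psi_T$ to force simultaneous positive rank on one common twist --- is not carried out and faces a real difficulty: the rational points of a single curve on the surface may scatter among infinitely many twist classes $[C]$ under the parametrization of Theorem \ref{thm:Z2torsors}, so no single $E_1^C\times E_2^C$ need receive infinitely many of them. The specialization argument sidesteps this entirely, which is exactly what makes the paper's proof work.
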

\begin{proof}
	The assumption fits into Kuwata-Wang's result \cite[Theorem 1]{K-W}. We briefly explain here how their method (stated over $\BQ$) can deduce that the rational points over $k$ an arbitrary number field are Zariski dense on $\Kum(E_1\times E_2)$. Indeed, after the base change $\zeta:\BP_k^1\to\BP_k^1,Y\mapsto Y^6$, we obtain a non-torsion section (the notation $\sigma_1$ in  \cite[p. 116]{K-W}) with respect to the new elliptic fibration $\CS\times_{\zeta,\BP_k^1}\BP_k^1\to\BP_k^1$ in coordinate $Y$. Therefore by Silverman's specialization theorem \cite{Silverman2}, almost all fibres have positive Mordell-Weil rank, whence the Zariski density (even the density in real topology if $k=\BQ$) holds. By Demeio's theorem \cite[Proposition 4.4]{Demeio}, the surface $\Kum(E_1\times E_2)$ verifies (HP) over $k$. The assertion now follows from Theorem \ref{thm:HP} (2).
\end{proof}

It would be interesting to compare with analytic approaches the condition \eqref{eq:bothrankpos}. A recent result of Petrow \cite[Theorem 2.2]{Petrow} studies simultaneous non-vanishing of derivatives of $L$-functions at the central value attached to two modular forms. He shows that, conditionally on GRH and under some assumption on conductors and root numbers, there exist infinitely many square-free integers $C$ such that
$\rank(E_1^C(\BQ))\rank(E_2^C(\BQ))=1$. 
See also a result Munshi \cite{Munshi} concerning simultaneous non-vanishing of $L$-functions at the central value, which has consequence in the number of $[C]\in \BQ^*/\BQ^{*2}$ such that $E_1^C$ and $E_2^C$ have simultaneously rank zero.
\section{Cassels-Schinzel type K3 surfaces}\label{se:CSsurfaces}
From now on assume $k=\BQ$. We study in this section the isotrivial elliptic pencil over $\BQ$ of $j$-invariant $1728$ with affine model
\begin{equation}\label{eq:Cassels-Schinzel}
\mathfrak{S}^{d,a}:d(1+a^2T^4)Y^2=X^3-X,
\end{equation}
for which we refer to being of Cassels-Schinzel type.
Here $d,a\in\BZ_{\neq 0}$ are square-free parameters.
(The original one constructed in \cite{C-S} is given by $a=1,d=7$.)
Let $\psi_T:\mathfrak{S}^{d,a}\to\BP^1$ be the elliptic fibration in coordinate $T$, write $\mathfrak{S}^{d,a}_t$ for the fibre over $t\in\BP^1(\BQ)$ and consider the set $$\mathcal{F}^{d,a}=\{t\in\BP^1(\BQ):\rank(\mathfrak{S}^{d,a}_t(\BQ))>0\}.$$ Cassels and Schinzel proved that the all sections with respect to $\psi_T$ are torsion (cf. \cite[Theorem 1 \& Corollary]{C-S}. Their argument yields the same result for twists by the polynomial $(1+4T^4)$). So the N\'eron-Silverman specialization theorem \cite{Silverman2} does not provide useful information on the ranks of fibres. We shall focus on the validity of condition \eqref{eq:bothrankpos} and its consequence on the density of rational points. 
Our main theorems are concerned with two families of surfaces of Cassels-Schinzel type and include Theorem \ref{thm:CS} as a special case.

\begin{theorem}\label{th:mainthm1}
	\hfill
	\begin{enumerate}
		\item There exists an infinite set $\mathfrak{D}_1$ of square-free integers $d$ containing $1,7,41,\cdots$ such that for the Kummer variety
		\begin{equation}\label{eq:CS1}
		\mathfrak{S}^{d,1}:d(1+T^4)Y^2=X^3-X,
		\end{equation} the condition \eqref{eq:bothrankpos} holds with at least one $[C]\in\BQ^*/\BQ^{*2}$ 
		\item 	There exists an infinite set $\mathfrak{D}_2$ of square-free integers $d$ such that for the Kummer variety
		\begin{equation}\label{eq:CS2}
		\mathfrak{S}^{d,2}:d(1+4T^4)Y^2=X^3-X,
		\end{equation} the condition \eqref{eq:bothrankpos} holds with infinitely many $[C]\in\BQ^*/\BQ^{*2}$. 
		\item For any $d\in\mathfrak{D}_1$ (resp. $d\in\mathfrak{D}_2$) rational points on \eqref{eq:CS1} (resp. on \eqref{eq:CS2}) are dense in the real topology, and the set $\mathcal{F}^{d,1}$ (resp. $\mathcal{F}^{d,2}$) is dense in $\BP^1(\BR)$. 
	\end{enumerate}
\end{theorem}


\subsection{Root number of congruent number elliptic curves}\label{se:rootnumber}
The \emph{root number} of an elliptic curve (denoted by $\omega(\cdot)$) appears in the functional equation of its $L$-function (cf. \cite[\S C.16]{Silverman1}, where it is called ``sign of the functional equation'').
Although the connection between root numbers and rational points of elliptic curves remains mostly conjectural, we find helpful in giving a short discussion here with regard to the variation of root numbers in the families \eqref{eq:CS1} \eqref{eq:CS2}.
Let 
\begin{equation}\label{eq:congruntnumbercurve}
\mathbf{E}:y^2=x^3-x
\end{equation} 
be the congruent number elliptic curve.
As usual $\mathbf{E}^D$ denotes the quadratic twist of $\mathbf{E}$ by a square-free integer $D$. We say that $D>0$ is a \emph{congruent number} if $\rank(\mathbf{E}^D(\BQ))>0$. 
A classical computation carried out by Birch and Stephens \cite{Birch-Stephens} shows that 
\begin{equation}\label{eq:congruentnumbers}
\omega(\mathbf{E}^D)=\begin{cases}
1 & ~ D\equiv 1,2,3 \mod8;\\
-1 & ~ D\equiv 5,6,7 \mod8.
\end{cases}
\end{equation}
The parity conjecture (cf. \cite[p. 119]{Rohrlich}) predicts that all such integers $D$ satisfying $\omega(\mathbf{E}^D)=-1$ should be congruent numbers.

We proceed to compute the root numbers of the family \eqref{eq:CS1} (as in \cite[p.347]{C-S}). Write $T=\frac{l}{m}$, with $(l,m)\in\BZ,\gcd(l,m)=1$.  A change of variables yields the equation
\begin{equation*}\label{eq:CasselsSchinzel2}
d(l^4+m^4)Y^2=X^3-X
\end{equation*}
for \eqref{eq:CS1}. Since for any odd prime $p\mid l^4+m^4$, we have $p\equiv 1\mod 8$ (cf. Proposition \ref{prop:a=1}).
We thus obtain a complete description of root numbers for fibres of \eqref{eq:CS1}, depending on the parity and the class modulo $8$ of $d$. For instance, if $d\equiv 7\mod 8$ (resp. $d\equiv 1\mod 8$) then the surface \eqref{eq:CS1} has constant root number $-1$ (resp. $+1$). For the family \eqref{eq:CS2} however, fibres tend to have varying root numbers (cf. \cite[Proposition 6.5.1]{Cohen}).
\subsection{Torsors under congruent number elliptic curves}\label{se:torsorundercong}
The purpose of this section is to discuss rational points on the hyperelliptic quartics
\begin{equation}\label{eq:HaC}
\CH_a^C:Cs^2=1+a^2t^4~\subset~ \mathbb{A}^2_{s,t},
\end{equation} 
with $C\in\BN_{\geqslant 1}$ square-free and $a\in \BZ_{\neq 0}$.

The (affine) real locus $\CH_a^C(\BR)$ of \eqref{eq:HaC} has two symmetric branches (one on the upside of the $t$-axis and the other one below). We see that even though the curve \eqref{eq:HaC} is not real connected, the involution $(s,t)\mapsto(-s,t)$ exchanges rational points between the two real components of \eqref{eq:HaC}. 
Consequently, if $\CH_a^C(\BQ)$ is infinite, then it is dense in all real connected components.

By the discussion in \S\ref{se:Weierstrass}, we know that
$\mathcal{H}^C_a$ is a torsor under $\mathbf{E}^{2aC}$, the quadratic twist of $\mathbf{E}$ \eqref{eq:congruntnumbercurve}  by $2aC$.
Hence if $\mathcal{H}^C_a(\BQ)\neq\varnothing$, then $\mathcal{H}^C_a\simeq_\BQ \mathbf{E}^{2aC}$.
However it happens that even if $\mathcal{H}_a^C$ is everywhere locally soluble, it still has no $\BQ$-point. Indeed, consider the class $[\mathcal{H}^C_a]\in \operatorname{H}^1(\BQ,\mathbf{E}^{2aC})$. According to the exact sequence of $\mathbb{F}_2$-vector spaces (cf. \cite[X.4. Theorem 4.2]{Silverman1}) induced by multiplication by $[2]$ map,
\begin{equation}\label{eq:exactseq2}
0\to\mathbf{E}^{2aC}(\BQ)/2\mathbf{E}^{2aC}(\BQ)\to\Sel(\BQ,\mathbf{E}^{2aC})\to\Sha^1(\BQ,\mathbf{E}^{2aC})[2]\to 0,
\end{equation}
if $[\mathcal{H}^C_a]$ maps to a non-zero element in $\Sha^1(\BQ,\mathbf{E}^{2aC})[2]$, then $\mathcal{H}^C_a(\BQ)=\varnothing$.
It follows from a formula of Monsky (cf. \cite[Appendix]{H-B}) that for $D\in\BN_{\geqslant 1}$ square-free,
\begin{equation}\label{eq:upperselmer}
	\dim_{\mathbb{F}_2}\Sel(\BQ,\BE^{D})\leqslant 2\Omega(D)+2,
\end{equation}
where $\Omega(D)$ is the number of odd prime divisors of $D$.

Up to sign, rational points on $\mathcal{H}_a^C$ are in one-to-one correspondence with non-zero primitive integral solutions of the Fermat-type equation $$X^4+a^2Y^4=CZ^2$$ by putting $$t=\frac{Y}{X},\quad s=\frac{Z}{X^2}.$$ Having Cassels-Schinzel's surface \eqref{eq:CS1} \eqref{eq:CS2} in mind, in what follows we shall be interested exclusively in the cases where $a=1$ and $a=2$. 
\subsubsection{Case $a=1$}\label{se:a=1}

\begin{proposition}\label{prop:a=1}
	The curve $\mathcal{H}^C_1$ is everywhere locally soluble if and only if 
	\begin{center}
			$(*)$ all odd prime factors of $C$ are $\equiv 1\mod 8 $.
	\end{center}
\end{proposition}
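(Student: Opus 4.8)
The plan is to verify everywhere local solubility one place at a time, exploiting that $\CH_1^C$ is a smooth projective curve of genus one — indeed a torsor under $\mathbf{E}^{2C}$, as recorded in \S\ref{se:torsorundercong} — so that being everywhere locally soluble means $\CH_1^C(\BQ_v)\neq\varnothing$ for every place $v$ of $\BQ$. Throughout, recall that $C>0$ is square-free. I would first dispose of the easy places. The archimedean place is immediate: since $C>0$, the point $(s,t)=(C^{-1/2},0)$ lies in $\CH_1^C(\BR)$. For an odd prime $p\nmid 2C$, the Jacobian $\mathbf{E}^{2C}:y^2=x^3-(2C)^2x$ has good reduction, and the class of $\CH_1^C$ in $H^1(\BQ_p,\mathbf{E}^{2C})$, being the image of a class in $H^1(\BQ_p,\mathbf{E}^{2C}[2])$, has order dividing $2$, hence prime to $p$; such a class is unramified, so $\CH_1^C$ has good reduction, and a smooth projective genus-one curve with good reduction over $\BQ_p$ has a rational point by the Hasse--Weil estimate on its reduction together with Hensel's lemma. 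Thus these places impose no condition on $C$.

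The heart of the matter is the set of odd primes $p\mid C$ (so $p\,\|\,C$, as $C$ is square-free), and this is where the congruence $p\equiv 1\pmod 8$ must appear. First I would check that $\CH_1^C$ has no $\BQ_p$-point at infinity: the two points above $t=\infty$ are $\BQ_p$-rational only if the leading-coefficient quotient $1/C$ is a square in $\BQ_p$, which fails because $v_p(C)=1$ is odd (here $v_p$ denotes the $p$-adic valuation). For affine points I would compare valuations in $Cs^2=1+t^4$. If $v_p(t)<0$ then $v_p(1+t^4)=4v_p(t)$ is even whereas $v_p(Cs^2)=1+2v_p(s)$ is odd, a contradiction; hence $t\in\BZ_p$ and $v_p(1+t^4)$ is odd, in particular positive, which forces $t^4\equiv-1\pmod p$ to be solvable. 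The decisive arithmetic input is that $-1$ is a fourth power modulo an odd prime $p$ if and only if $p\equiv 1\pmod 8$: in the cyclic group $\mathbb{F}_p^*$ the element $-1$ of order $2$ lies among the fourth powers exactly when $8\mid p-1$. Consequently, if $p\not\equiv 1\pmod 8$ there is no affine point either and $\CH_1^C(\BQ_p)=\varnothing$; conversely, if $p\equiv 1\pmod 8$, Hensel's lemma lifts a solution of $t^4\equiv-1$ to $t_0\in\BZ_p^*$ with $t_0^4=-1$, and then $(s,t)=(0,t_0)$ is a $\BQ_p$-point, smooth since $\partial_t(1+t^4-Cs^2)=4t_0^3\neq 0$. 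This already delivers the ``only if'' direction.

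It remains to confirm that, once condition $(*)$ is assumed, the prime $2$ contributes no further obstruction. If $C$ is odd then all its prime factors are $\equiv 1\pmod 8$, so $C\equiv 1\pmod 8$, whence $1/C$ is a square in $\BZ_2^*$ and $(s,t)=(\sqrt{1/C},0)$ lies in $\CH_1^C(\BQ_2)$. If $C=2C'$ with $C'$ odd, then $C'\equiv 1\pmod 8$, and since $1+1^4=2$ the point $(s,t)=(\sqrt{1/C'},1)$ solves $2C's^2=1+t^4$ over $\BQ_2$. Putting the three parts together: condition $(*)$ guarantees solubility at $\BR$, at $2$, and at every odd prime (those dividing $C$ by the fourth-power criterion, the rest by good reduction), while any everywhere locally soluble $\CH_1^C$ must satisfy the fourth-power criterion at each odd $p\mid C$, which is exactly $(*)$.

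The main difficulty is the analysis at the odd primes $p\mid C$: establishing the absence of points at infinity, running the valuation-parity argument that reduces $p$-adic solubility to the solvability of $t^4\equiv -1\pmod p$, and pinning down that this solvability is governed precisely by $p\equiv 1\pmod 8$. By contrast, the archimedean and good-reduction places are routine, and the prime $2$ requires only producing one explicit point under the hypothesis $(*)$.
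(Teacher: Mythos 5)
The paper gives no proof of this proposition at all: it is imported verbatim from \cite{Cohen} (Proposition 6.5.2). So your self-contained argument is necessarily a different route from the paper's, and its essential content is correct and is in the spirit of the source. The decisive case, an odd prime $p\mid C$, is handled completely and correctly: since $v_p(C)=1$ is odd there is no $\BQ_p$-point at infinity; the valuation-parity comparison forces $t\in\BZ_p$ and $p\mid 1+t^4$, hence solvability of $t^4\equiv -1\pmod p$; and $-1$ is a fourth power modulo $p$ exactly when $p\equiv 1\pmod 8$, with Hensel's lemma supplying the point $(s,t)=(0,t_0)$, $t_0^4=-1$, in the converse direction. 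The places $v=\infty$ and $v=2$ are also correctly dispatched by explicit points, using that $(*)$ forces $C\equiv 1\pmod 8$ (or $C=2C'$ with $C'\equiv 1\pmod 8$).

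One justification you give is, however, false as stated and should be replaced. At odd primes $p\nmid 2C$ you argue: the class of $\CH_1^C$ in $H^1(\BQ_p,\BE^{2C})$ has order dividing $2$, hence prime to $p$, \emph{hence} is unramified, \emph{hence} $\CH_1^C$ has good reduction. The implication ``order prime to $p$ $\Rightarrow$ unramified'' fails: by Tate local duality, $H^1(\BQ_p,\BE^{2C})[2]$ is dual to $\BE^{2C}(\BQ_p)/2\BE^{2C}(\BQ_p)\cong(\BZ/2)^2$, which is nonzero because $\BE^{2C}:y^2=x^3-(2C)^2x$ has full rational $2$-torsion; and since unramified classes vanish when the Jacobian has good reduction (Lang's theorem applied to $H^1(\BQ_p^{\mathrm{ur}}/\BQ_p,\BE^{2C}(\BQ_p^{\mathrm{ur}}))$), these nonzero classes of order $2$ are all ramified --- they are genus-one curves of period $2$ with good-reduction Jacobian and no $\BQ_p$-point. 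Fortunately your conclusion does not need this detour: for $p\nmid 2C$ the curve $\CH_1^C$ \emph{visibly} has good reduction, since the two charts $Cs^2=1+t^4$ and $Cw^2=u^4+1$ glue to a smooth proper scheme over $\BZ_p$ (the quartic $1+t^4$ has discriminant a power of $2$ and $C$ is a unit), after which your Hasse--Weil plus Hensel argument applies verbatim. With that one-line replacement the proof is complete.
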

\begin{proof}
	This follows from the local solubility criterion \cite[Proposition 6.5.2]{Cohen} for the equation $X^4+Y^4=CZ^2$.
\end{proof}
According to \eqref{eq:congruentnumbers}, under the hypothesis $(*)$, the root number of $\BE^{2C}$ is zero, which conjecturally implies that $\rank(\BE^{2C}(\BQ))$ is even. Although most of such curves (are believed to) have rank zero, the following supplementary condition $(**)$ guarantees the existence of infinitely many points on $\CH_1^C$. 
\begin{corollary}\label{co:CohenC}
	Let $C\in\BN_{\geqslant 1}$ be square-free verifying $(*)$ and
	$$(**)\quad 2\Omega(C)=\rank(\BE^{2C}(\BQ)).$$
	Then $\#\mathcal{H}^C_1(\BQ)=\infty$.
\end{corollary}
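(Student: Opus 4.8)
The plan is to run a $2$-descent on $\BE^{2C}$ and show that hypothesis (**) forces the $2$-torsion of the Tate--Shafarevich group to vanish, so that the everywhere locally soluble torsor $\CH^C_1$ must in fact acquire a rational point; once it does, the positive rank of $\BE^{2C}$ supplies infinitely many.

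First I would record what (*) buys us: by Proposition \ref{prop:a=1} the curve $\CH_1^C$ is everywhere locally soluble. As recalled in \S\ref{se:Weierstrass} and \S\ref{se:torsorundercong}, $\CH^C_1$ is a torsor under $\BE^{2C}$ whose class lifts to $H^1(\BQ,\BE^{2C}[2])$ (it carries a $2$-covering map), so $[\CH^C_1]$ is a $2$-torsion, everywhere locally trivial class, i.e. $[\CH^C_1]\in\Sha^1(\BQ,\BE^{2C})[2]$, and it determines a class in the $2$-Selmer group $\Sel(\BQ,\BE^{2C})$ mapping to $[\CH_1^C]$ under the surjection $\Sel(\BQ,\BE^{2C})\to\Sha^1(\BQ,\BE^{2C})[2]$ of \eqref{eq:exactseq2}.

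Next I would pin down the two dimensions in \eqref{eq:exactseq2}. Since the cubic $x^3-x$ splits over $\BQ$, every quadratic twist $\BE^{2C}$ has full rational $2$-torsion, so $\dim_{\mathbb{F}_2}\BE^{2C}(\BQ)/2\BE^{2C}(\BQ)=\rank(\BE^{2C})+2$. Writing $n$ for the square-free part of $2C$ (so $\BE^{2C}=\BE^{n}$), the odd prime divisors of $n$ coincide with those of $C$, whence $\Omega(n)=\Omega(C)$, and the Selmer bound \eqref{eq:upperselmer} reads $\dim_{\mathbb{F}_2}\Sel(\BQ,\BE^{2C})\leqslant 2\Omega(C)+2$. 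Substituting $\rank(\BE^{2C})=2\Omega(C)$ via (**) yields
$$2\Omega(C)+2=\dim_{\mathbb{F}_2}\BE^{2C}(\BQ)/2\BE^{2C}(\BQ)\leqslant\dim_{\mathbb{F}_2}\Sel(\BQ,\BE^{2C})\leqslant 2\Omega(C)+2,$$
so both the injection in \eqref{eq:exactseq2} and the estimate \eqref{eq:upperselmer} are equalities. In particular $\Sha^1(\BQ,\BE^{2C})[2]=0$, which forces $[\CH^C_1]=0$; hence $\CH^C_1$ is the trivial torsor and $\CH^C_1(\BQ)\neq\varnothing$.

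Finally, a rational point gives $\CH^C_1\simeq_{\BQ}\BE^{2C}$, and since $C$ has an odd prime factor (so $\Omega(C)\geqslant 1$) we have $\rank(\BE^{2C})=2\Omega(C)>0$; therefore $\BE^{2C}(\BQ)$, and with it $\CH^C_1(\BQ)$, is infinite. The only genuine content beyond bookkeeping is the \emph{tightness} of \eqref{eq:upperselmer}: the argument succeeds precisely because (**) makes the Mordell--Weil contribution saturate Monsky's upper bound, leaving no room for $\Sha[2]$. The step I would be most careful with is getting this dimension count exactly right --- the full rational $2$-torsion contributing the summand $2$, and $\Omega$ being insensitive to the prime $2$ when passing from $2C$ to its square-free part --- since everything hinges on that single equality.
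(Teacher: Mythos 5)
Your proof is correct and follows essentially the same route as the paper: hypothesis (**) sandwiches $\dim_{\mathbb{F}_2}\Sel(\BQ,\BE^{2C})$ between $\rank(\BE^{2C})+\dim_{\mathbb{F}_2}(\BE^{2C}[2](\BQ))=\rank(\BE^{2C})+2$ and Monsky's bound $2\Omega(C)+2$ via the exact sequence \eqref{eq:exactseq2}, forcing $\Sha^1(\BQ,\BE^{2C})[2]=0$ and hence $\CH_1^C\simeq_\BQ\BE^{2C}$. You are in fact slightly more explicit than the paper on two points it leaves implicit, namely that local solubility from (*) is what places $[\CH_1^C]$ in $\Sha^1(\BQ,\BE^{2C})[2]$ to begin with, and that the final infinitude requires $\Omega(C)\geqslant 1$ so that the rank $2\Omega(C)$ is positive.
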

\begin{proof}
	Indeed, from the exact sequence \eqref{eq:exactseq2} and inequality \eqref{eq:upperselmer}, we have
	\begin{align*}
	\dim_{\mathbb{F}_2}(\mathbf{E}^{2aC}(\BQ)/2\mathbf{E}^{2aC}(\BQ))&=\rank(\BE^{2C}(\BQ))+\dim_{\mathbb{F}_2}(\BE^{2C}(\BQ)[2])\\
	&=\rank(\BE^{2C}(\BQ))+2\\ &\leqslant \dim_{\mathbb{F}_2}\Sel(\BQ,\BE^{2C})\leqslant 2\Omega(C)+2.
	\end{align*}
	Hence under the assumption $(**)$, the inequalities above are all equalities. So in particular $\Sha^1(\BQ,\mathbf{E}^{2C})[2]=0$.
	This implies that $\mathcal{H}_1^C\simeq_\BQ \BE^{2C}$.
\end{proof}
\begin{remark}\label{rmk:congruent}
	Theorem \ref{thm:HP} (2) shows that the validity of (HP) for the surface $\mathfrak{S}^{d,1}$ implies the existence of infinitely many square-free integers $C$ verifying $(*)$ and $(**)$. We refer to Cohen's table \cite[p. 395]{Cohen} for all such $C$'s less than $10000$. However, such numbers seem to be very sparse. To the best of the author's knowledge, it is even not currently known whether there exist infinitely many congruent numbers verifying $(*)$.
	Even though elliptic surfaces in the family \eqref{eq:CS1} can have constant root number $-1$, preceding results indicate that it seems difficult to produce rational points on them, as the hyperelliptic curves $\CH_1^C$ parametrizing fibres of \eqref{eq:CS1} have few chances to possess infinitely many rational points.
\end{remark}

\subsubsection{Case $a=2$}\label{se:a=2}
The curve $\CH^C_2$ is a $\BQ$-torsor under $\BE^{4C}\simeq_\BQ\BE^C$.
Let us begin by describing a sufficient criterion of local solubility.
\begin{proposition}\label{prop:a=2}
	Assume that $C\equiv 5\mod 8$, and that for any prime $p\mid C$, $-4$ is a fourth root in $\mathbb{F}_p$. Then $\CH_2^C$ is everywhere locally soluble.
\end{proposition}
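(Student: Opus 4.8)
The plan is to verify everywhere local solubility place by place directly from the affine equation $Cs^2=1+4t^4$ of $\CH_2^C$ (\eqref{eq:HaC} with $a=2$), producing an explicit $\mathbb{Q}_v$-point at each place $v$, with the two hypotheses entering at exactly two places. Since $C>0$, the real place is immediate: $(t,s)=(0,C^{-1/2})$ lies on $\CH_2^C(\BR)$, as already observed in \S\ref{se:tophyp}. For the odd primes I would split according to whether $p$ divides $C$.

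At an odd prime $p\nmid 2C$ the curve has good reduction. The quartic $1+4t^4$ stays separable modulo $p$, since its derivative $16t^3$ can only vanish at $t=0$ while the constant term is a unit, and $C$ remains a unit; hence the smooth projective genus-one model of $\CH_2^C$ reduces to a smooth genus-one curve over $\mathbb{F}_p$. The Hasse bound then gives $\#\CH_2^C(\mathbb{F}_p)\geqslant p+1-2\sqrt{p}>0$, and any smooth $\mathbb{F}_p$-point lifts by Hensel's lemma to a $\mathbb{Q}_p$-point. This disposes of all odd $p\nmid C$ with no use of the hypotheses.

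At an odd prime $p\mid C$ the hypothesis that $-4$ is a fourth power in $\mathbb{F}_p$ is precisely what is needed. Writing $-1/4=(-4)(1/2)^4$ shows that $-1/4$ and $-4$ lie in the same coset of $(\mathbb{F}_p^*)^4$, so $-1/4$ is also a fourth power modulo $p$; thus $1+4t^4$ has a root $t_0\in\mathbb{F}_p$, necessarily simple since $t_0\neq 0$ and $p$ is odd make the derivative $16t_0^3$ nonzero. Hensel's lemma lifts $t_0$ to $t_*\in\mathbb{Z}_p$ with $1+4t_*^4=0$, and then $(t_*,0)$ is a $\mathbb{Q}_p$-point of $\CH_2^C$ (smooth because $t_*\neq 0$). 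Note this argument uses the torsor description of \S\ref{se:Weierstrass} only implicitly, the point being exhibited outright.

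The delicate place, and the one I expect to be the main obstacle, is $p=2$, where the congruence $C\equiv 5\bmod 8$ must be used with care. Here I would take $t=1$, giving $Cs^2=5$, i.e. $s^2=5/C$. An even value of $t$ would force $1+4t^4\equiv 1$ and hence $s^2\equiv 5/C\equiv 5\bmod 8$, a non-square, so an odd $t$ is forced; with $t=1$ one has $C^{-1}\equiv 5\bmod 8$, whence $5/C\equiv 25\equiv 1\bmod 8$. Since a $2$-adic unit is a square exactly when it is $\equiv 1\bmod 8$, we obtain $s=\sqrt{5/C}\in\mathbb{Z}_2^*$ and $(1,s)\in\CH_2^C(\mathbb{Q}_2)$. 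The only real care needed is tracking residues to the modulus $8$; once $t=1$ is chosen the verification is a one-line congruence. Collecting the four cases establishes everywhere local solubility.
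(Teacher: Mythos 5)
Your proof is correct, but it takes a more self-contained route than the paper. The paper's proof is essentially a two-line reduction: non-trivial points of $\CH_2^C$ correspond to integral solutions of the homogeneous equation $X^4+4Y^4-CZ^2=0$, and local solubility is then quoted from Cohen's Proposition 6.5.1, parts (2c) and (3e). What you have done is, in effect, reprove the relevant cases of that citation directly on the affine model \eqref{eq:HaC}: the real place uses $C>0$; primes $p\nmid 2C$ are handled by good reduction, Hasse--Weil and Hensel; at $p\mid C$ (necessarily odd, since $C\equiv 5 \bmod 8$) the hypothesis that $-4$ is a fourth power in $\mathbb{F}_p$ gives a root of $1+4t^4$ and hence a Weierstrass point $(t_*,0)$ over $\BQ_p$; and at $p=2$ the choice $t=1$ reduces matters to $5/C\equiv 1 \bmod 8$ being a $2$-adic square. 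Each step checks out, and your argument has the merit of making visible exactly where each hypothesis of the proposition is used, at the price of length; the paper's version buys brevity at the price of depending on the precise statement (and labelling) of Cohen's result. One cosmetic slip: in your aside at $p=2$ explaining why $t$ must be odd, an even $t$ gives $s^2\equiv 1/C\equiv 5 \bmod 8$, not $s^2\equiv 5/C\equiv 5\bmod 8$; this is only a typo in a motivational remark and does not affect the actual verification with $t=1$.
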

\begin{proof}
	This follows from the sufficient local solubility condition \cite[Proposition 6.5.1 (2c)\&(3e)]{Cohen} for non-zero integral solutions of the equation $X^4+4Y^4-CZ^2=0$.
\end{proof}

The condition $C\equiv 5\mod 8$ implies that $\omega(\BE^C)=-1$ by \eqref{eq:congruentnumbers}. Standard conjectures predict that
$$\dim_{\mathbb{F}_2} \Sel(\BQ,\BE^C)\equiv \rank(\BE^C(\BQ))\equiv 1\mod 2.$$
The following unconditional result provides an infinite set of square-free integers $C$ such that $\CH_2^C$ has infinitely many rational points.
\begin{proposition}\label{prop:H2p}
	For every prime number $p\equiv 5\mod 8$, we have $\#\CH_2^p(\BQ)=\infty$.
\end{proposition}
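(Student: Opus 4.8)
The plan is to reproduce the descent of Corollary~\ref{co:CohenC}, replacing the arithmetic hypothesis (**) by the \emph{unconditional} positivity of the Mordell--Weil rank coming from Heegner points. Recall that $\CH_2^p$ is a $\BQ$-torsor under $\BE^{4p}\simeq_\BQ\BE^p$. First I would check that $\CH_2^p$ is everywhere locally soluble. Since $p\equiv 5\mod 8$ we have $p\equiv 1\mod 4$, so $-1$ is a square in $\mathbb{F}_p$; fixing $i=\sqrt{-1}\in\mathbb{F}_p$ one computes $(1+i)^2=2i$ and hence $(1+i)^4=-4$, so that $-4$ is a fourth power modulo $p$. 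As the only odd prime factor of $C=p$ is $p$ itself, the hypotheses of Proposition~\ref{prop:a=2} are satisfied and $\CH_2^p$ is everywhere locally soluble. In particular $[\CH_2^p]$ defines a class in $\Sel(\BQ,\BE^p)$ through the identification $\BE^{4p}\simeq_\BQ\BE^p$.

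Next I would determine the $2$-Selmer group exactly. Applying \eqref{eq:upperselmer} to the square-free integer $p$ gives $\dim_{\mathbb{F}_2}\Sel(\BQ,\BE^p)\leqslant 2\Omega(p)+2=4$, while $\dim_{\mathbb{F}_2}\Sel(\BQ,\BE^p)\geqslant\dim_{\mathbb{F}_2}\bigl(\BE^p(\BQ)/2\BE^p(\BQ)\bigr)=\rank(\BE^p)+2\geqslant 2$, using that $\BE^p$ has full rational $2$-torsion. By \eqref{eq:congruentnumbers} the root number of $\BE^p$ is $\omega(\BE^p)=-1$ for $p\equiv 5\mod 8$, and the ($2$-)parity theorem for congruent-number curves (Monsky's complete $2$-descent, c.f. \cite[Appendix]{H-B}, which already underlies the local conditions of Proposition~\ref{prop:a=2}) forces $\dim_{\mathbb{F}_2}\Sel(\BQ,\BE^p)$ to be odd. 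Together with the two bounds this yields $\dim_{\mathbb{F}_2}\Sel(\BQ,\BE^p)=3$.

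The decisive input is the unconditional fact that every prime $p\equiv 5\mod 8$ is a congruent number, i.e. $\rank(\BE^p(\BQ))\geqslant 1$, provided by the construction of mock Heegner points due to Monsky. Substituting into the exact sequence \eqref{eq:exactseq2} for $\BE^p$ we obtain $3=\dim_{\mathbb{F}_2}\Sel(\BQ,\BE^p)=\bigl(\rank(\BE^p)+2\bigr)+\dim_{\mathbb{F}_2}\Sha^1(\BQ,\BE^p)[2]\geqslant 3$, so all inequalities are equalities: $\rank(\BE^p)=1$ and $\Sha^1(\BQ,\BE^p)[2]=0$. The vanishing of $\Sha^1[2]$ means that the everywhere locally soluble class $[\CH_2^p]$ is trivial, hence $\CH_2^p(\BQ)\neq\varnothing$ and thus $\CH_2^p\simeq_\BQ\BE^{4p}\simeq_\BQ\BE^p$. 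Since this curve has rank $1$, we conclude $\#\CH_2^p(\BQ)=\infty$.

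The main obstacle is exactly the unconditional positivity of $\rank(\BE^p)$: the root-number computation only predicts odd (hence positive) rank under the parity conjecture, and the genuinely unconditional statement rests on the deep (mock) Heegner-point machinery. By comparison the local solubility and the Selmer count are routine; the one point requiring care there is that the upper bound $4$ of \eqref{eq:upperselmer} is actually sharpened to $3$, which is where the oddness of the $2$-Selmer dimension forced by $\omega(\BE^p)=-1$ enters.
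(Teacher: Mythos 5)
Your proof is correct and follows the same overall descent strategy as the paper: verify everywhere-local solubility via Proposition \ref{prop:a=2}, view $[\CH_2^p]$ as a class in $\Sel(\BQ,\BE^p)$, show $\Sha^1(\BQ,\BE^p)[2]=0$ so that the torsor is trivial, and conclude from the rank being $1$. Two points of comparison. First, your verification that $-4$ is a fourth power modulo $p$ (via $(1+i)^4=-4$ for $i=\sqrt{-1}\in\mathbb{F}_p$, available since $p\equiv 1\bmod 4$) is more elementary and self-contained than the paper's, which manipulates quartic residue symbols ($\left(\tfrac{-1}{p}\right)_4=\left(\tfrac{2}{p}\right)=-1$, hence $\left(\tfrac{-4}{p}\right)_4=1$); both are fine. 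Second, and more substantively, the paper pins down the Selmer group in one stroke by citing the classical result of Heegner in the form \cite[Corollary 5.15 (1)]{Monsky}, which gives both $\rank(\BE^p(\BQ))=1$ and $\dim_{\mathbb{F}_2}\Sel(\BQ,\BE^p)=3$, after which \eqref{eq:exactseq2} immediately yields $\Sha^1(\BQ,\BE^p)[2]=0$. You instead reconstruct $\dim_{\mathbb{F}_2}\Sel(\BQ,\BE^p)=3$ from the upper bound \eqref{eq:upperselmer} combined with a $2$-parity theorem forcing the Selmer dimension to be odd, and then feed in only the rank positivity coming from (mock) Heegner points. This route is logically sound --- the $2$-parity statement for the congruent number curves is indeed an unconditional theorem of Monsky, and without it your bounds $2\leqslant\dim_{\mathbb{F}_2}\Sel\leqslant 4$ together with $\rank\geqslant 1$ would leave open the possibility $\dim_{\mathbb{F}_2}\Sel=4$, $\dim_{\mathbb{F}_2}\Sha^1[2]=1$ --- but be aware that it is a genuine extra input and your attribution of it is off: it is not contained in Monsky's appendix to \cite{H-B} (which establishes the matrix formula behind \eqref{eq:upperselmer}), nor does that appendix ``underlie'' Proposition \ref{prop:a=2}, whose local conditions come from Cohen's elementary local analysis \cite{Cohen}. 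Since the reference \cite[Corollary 5.15 (1)]{Monsky} that you need anyway for rank positivity already records the Selmer dimension, the parity detour can simply be replaced by that single citation, which is exactly what the paper does.
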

\begin{proof}
	First we check local solubility. Write $\left(\frac{\cdot}{\cdot}\right)$ (resp. $\left(\frac{\cdot}{\cdot}\right)_4$) for the quadratic (resp. quartic) residue symbol. 
	First, note that for any odd prime $p$, $$\left(\frac{4}{p}\right)_4=1\Leftrightarrow \text{ either }\left(\frac{2}{p}\right)=1\text{ or }\left(\frac{-2}{p}\right)=1.$$ Now for any prime $p\equiv 5\mod 8$, we have
	$$\left(\frac{-1}{p}\right)=1,\quad \left(\frac{-1}{p}\right)_4=\left(\frac{2}{p}\right)=\left(\frac{-2}{p}\right)=-1.$$ We deduce that $$\left(\frac{4}{p}\right)_4=-1.$$
	Therefore $$\quad\left(\frac{-4}{p}\right)_4=1.$$
	So for any prime $p\equiv 5\mod 8$, $\CH_2^p$ verifies the hypotheses of Proposition \ref{prop:a=2}. 
	Now we make use of a classical result due to Heegner (cf. \cite[Corollary 5.15 (1)]{Monsky}) affirming that 
	$$\rank(\BE^p(\BQ))=1,\quad \dim_{\mathbb{F}_2} \Sel(\BQ,\BE^p)=3.$$
Going back to the exact sequence \eqref{eq:exactseq2} we get $\Sha^1(\BQ,\BE^p)[2]=0$.
	So $\CH_2^p\simeq_\BQ\BE^p$ and $\rank(\CH_2^p(\BQ))=\rank(\BE^p(\BQ))=1$. 
\end{proof}

\subsubsection{Remark on $2$-descent}
Following Cohen \cite[\S6.5]{Cohen}, one can give explicit criterion to determine whether the class $[\CH_a^C]\in \Sel(\BQ,\BE^{2aC})$ comes from some non-torsion point of $\BE^{2aC}(\BQ)$ using descent by $2$-isogeny.
Indeed, consider the elliptic curve 
$\mathsf{E}^{aC}:aCy^2=x^3+x$.
There exists a $2$-isogeny $\phi:\BE^{2aC}\to \mathsf{E}^{aC}$, so that we get an exact sequence
$$\xymatrix{0\ar[r]&\BZ/2  \ar[r] &\mathbf{E}^{2aC}\ar[r]^{\phi} &\mathsf{E}^{aC}\ar[r]&0}.$$
This gives rise to
$$\xymatrix{
	&\mathsf{E}^{aC}(\BQ) \ar[r]^-\alpha &\operatorname{H}^1(\BQ,\BZ/2)\ar[r] &\operatorname{H}^1(\BQ,\mathbf{E}^{2aC}) \ar[r]^{\phi^\prime} &\operatorname{H}^1(\BQ,\mathsf{E}^{aC})},$$
where $\alpha$ is a partial $2$-descent map and $\phi^\prime$ is induced by $\phi$.
There is an elementary way of transforming points on $\CH_a^C$ into $\mathsf{E}^{aC}$ (cf. \cite[Proposition 6.5.5]{Cohen}). We conclude that $\psi^\prime[\CH_a^C]=0$ in $\operatorname{H}^1(\BQ,\mathsf{E}^{aC})$. So whether $[\CH_a^C]\in \operatorname{H}^1(\BQ,\BE^{2aC})$ is zero or not is characterised by the image of the map $\alpha$. See \cite[\S 6.5.3]{Cohen} for a simple description of $\alpha$.

\subsection{Proof of Theorem \ref{th:mainthm1}}
 By Theorem \ref{thm:Z2torsors}, outside a proper Zariski closed subset (comprising $24$ rational curves), rational points on the surface $\mathfrak{S}^{d,a}$ \eqref{eq:Cassels-Schinzel} are parametrized by the family $((\CH_a^C\times \mathbf{E}^{Cd})(\BQ))_{C\text{ square-free}}$ (an open subset of each), where $\CH_a^C$ is defined by \eqref{eq:HaC} and $\mathbf{E}$ is given by \eqref{eq:congruntnumbercurve}.
\subsubsection{Proof of (1)}  
We show that the infinite family of surfaces $\mathfrak{S}^{d,1}$ with $d=17p$ with $p$ prime $\equiv 5$ or $7\mod 8$ verifies \eqref{eq:bothrankpos} for at least one square-free $C>0$. We first extract the value $C=17$ from Cohen's table \cite[p. 395]{Cohen} satisfying the hypotheses of Corollary \ref{co:CohenC}, so that $\#\CH_1^{17}(\BQ)=\infty$. On the other hand, with this choice of $d$, we have $\mathbf{E}^{17\times 17p}\simeq_\BQ \mathbf{E}^p$. Thanks to \cite[Corollary 5.15 (2)]{Monsky}, any prime number $\equiv 5,7\mod 8$ is a congruent number. Therefore the condition (SPR(17)) holds for the infinite family $(\mathfrak{S}^{17p,1},p\equiv 5,7\mod 8)$.

To derive similar result for the surfaces $\mathfrak{S}^{1,1},\mathfrak{S}^{7,1}$ and $\mathfrak{S}^{41,1}$ with constant root numbers, it suffices to take $C=113$ and $C=257$ from Cohen's table \cite[p. 395]{Cohen} so that $\#\CH_1^{257}(\BQ),\#\CH_1^{113}(\BQ)=\infty$ and a consultation of elliptic curve database shows that $\rank(\mathbf{E}^{257}(\BQ))=2$,
$\rank(\mathbf{E}^{7\times 17}(\BQ))=1$ and $\rank(\mathbf{E}^{41\times 113}(\BQ))=2$. So the condition (SPR(257)) (resp. (SPR(17)), resp. (SPR(113))) holds for $\mathfrak{S}^{1,1}$ (resp. $\mathfrak{S}^{7,1}$, resp. $\mathfrak{S}^{41,1}$).

We conclude that for any $d\in\mathfrak{D}_1$, where
$$\mathfrak{D}_1=\{1,7,41,17p, ~(p\equiv 5,7\text{ mod }8)\},$$
the condition \eqref{eq:bothrankpos} holds for at least one square-free $C$.
\qed


\subsubsection{Proof of (2)}
Now we consider the family of surfaces $\mathfrak{S}^{d,2}$.
By a result of Monsky \cite[Corollary 5.15 (2)]{Monsky}, we have $\rank(\BE^C(\BQ))>0$ for any square-free integer $C$ of the form $pq$ or $2pq$ where $p\equiv 5\mod 8,q\equiv 3\text{ or }7\mod 8$ are prime numbers.
Therefore, for any $d=q$ or $2q$ with $q$ prime $\equiv 3\text{ or }7\mod 8$, we have
$\rank(\BE^{pd}(\BQ))\rank(\CH_2^p(\BQ))>0$ for any prime $p\equiv 5\mod 8$, using Proposition \ref{prop:H2p}.
We have shown that, for the infinite family $(\mathfrak{S}^{d,2})_{d\in\mathfrak{D}_2}$ of surfaces with $$\mathfrak{D}_2=\{q,2q,~(q\equiv 3\text{ or }7 \text{ mod } 8)\},$$ 
the condition (SPR($p$)) holds for any prime $p\equiv 5\mod 8$.
\qed

\subsubsection{Proof of (3)}
For any $C>0$, if $\rank(\BE^C(\BQ))>0$, then $\BE^C(\BQ)$ is dense in $\BE^C(\BR)$, since $\mathbf{E}^C[2]\simeq (\BZ/2)^2$ is defined over $\BQ$. The discussion in the beginning of \S\ref{se:torsorundercong} shows that $\#\CH_a^C(\BQ)=\infty$ implies $\CH_a^C(\BQ)$ is dense in $\CH_a^C(\BR)$. Once the condition \eqref{eq:bothrankpos} holds for one square-free $C>0$, the set $(\BE^C\times\CH_a^C)(\BQ)$ is dense in real topology.
Since the map $\phi_C$ \eqref{eq:doublephi} is generically \'etale, this implies that $\mathfrak{S}^{d,a}(\BQ)$ is dense in $\mathfrak{S}^{d,a}(\BR)$.
Moreover, the fibres $(\mathfrak{S}^{d,a}_t,t\in\BP^1(\BQ))$ are parametrized by rational points on the family of hyperelliptic curves $(\CH_a^C)_{C\text{ square-free}}$.
Then $\#\CH_a^C(\BQ)=\infty$ shows that the image of rational points under the projection $\CH_a^C\to\BP^1$ (in coordinate $t$), as a subset of $\mathcal{F}^{d,a}$, is dense in $\BP^1(\BR)$, so is the set $\mathcal{F}^{d,a}$ itself. 
Thus the proof of Theorem \ref{th:mainthm1} is completed. \qed
\section{Remarks and questions}\label{se:remarks}
The property (HP) for a variety $X$ over $k$ amounts to saying that the complement of the image of rational points on any finite collection of covers (that is, dominant generically finite rational maps of degree greater than two without rational sections) in $X(k)$ is still Zariski dense. As K3 surfaces are simply connected, any cover has non-empty ramification locus. For elliptic K3 surfaces of quadratic twist type \eqref{eq:CSfamily}, there is one particular type of covers which merits consideration. That is, those whose ramification locus is within the $16$ exceptional lines as blow-ups of the $16$ fixed points of the antipodal involution. One can show that any such cover which does not factor through an abelian surface of the form $E_1^C\times E_2^C$ is a cover between Kummer varieties (cf. the map $\widetilde{\Phi}$ constructed in \S\ref{se:Kummervar}). We have already seen a typical example in \eqref{eq:coveringkum}. The essential use of the third elliptic fibration in Demeio's argument, which does not exist for surfaces of Cassels-Schinzel type, successfully tackles with such covers, since it turns out that their generic fibres with respect to this fibration are curves of genus greater than two. 
We conclude by asking whether the validity of \eqref{eq:bothrankpos} with infinitely many square-free integers $C$ \footnote{As suggested by Demeio (private communication), one may need that such $C$'s form a non-thin subset of $\BQ$.} suffices to guarantee (HP) for surfaces of Cassels-Schinzel type \eqref{eq:Cassels-Schinzel}, or in an equivalent manner, whether or not there exist a finite number of coverings of Kummer varieties such that the images of the induced isogenies between twisted abelian surfaces contain $(\BE^{Cd}\times\CH_a^C)(\BQ)$ for any such $C$.


\begin{thebibliography}{99}
	\bibitem{Birch-Stephens}  B. J. Birch \and N. M. Stephens. \emph{The parity of the rank of the Mordell-Weil group}. Topology 5 (1966), 295–299.
	\bibitem{B-T} F. A. Bogomolov \and Yu. Tschinkel. \emph{Density of rational points on elliptic K3 surfaces.} Asian J. Math., 4(2):351–368, 2000.
	\bibitem{Cohen} Henri Cohen. \emph{Number theory. Vol. I. Tools and Diophantine equations}, volume 239 of Graduate Texts in Mathematics. Springer, New York, 2007.
	\bibitem{C-S}
	J. W. S. Cassels \and A. Schinzel. S\emph{elmer's conjecture and families of elliptic curves}. Bull. London Math. Soc., 14(4):345–348, 1982.
	\bibitem{CT-Sk-SD} J.-L. Colliot-Thélène, A. N. Skorobogatov, \and Peter Swinnerton-Dyer. \emph{Double fibres and double covers: paucity of rational points}. Acta Arith., 79(2):113–135, 1997.
	\bibitem{Connell} I. Connell. \emph{Addendum to a paper of Harada and Lang}, J. Algebra 145 (1992), 463–467.
	\bibitem{Corvaja-Zannier} Pietro Corvaja \and Umberto Zannier. \emph{On the Hilbert property and the fundamental group of algebraic varieties}. Math. Z., 286(1-2):579–602, 2017.
	\bibitem{Demeio} Julian L. Demeio. \emph{Elliptic fibrations and Hilbert property}. Int. Math. Res. Not., rnz108, 2019.
	\bibitem{Desjardins} Julie Desjardins. \emph{Root number of twists of an elliptic curve.} J. Théor. Nombres Bordeaux, Tome 32 (2020) no. 1, pp. 73-101.
	\bibitem{Gvirtz} Damián Gvirtz. \emph{Mazur's Conjecture and an Unexpected Rational Curve on Kummer Surfaces and Their Superelliptic Generalisations}. Acta Arith. 187 (2019), 189-200.
	\bibitem{H-B} D. R. Heath-Brown. \emph{The size of Selmer groups for the congruent number problem. II}. Invent. Math., 118(2):331–370, 1994. With an appendix by P. Monsky.
	\bibitem{H-Sk} Yonatan Harpaz \and Alexei Skorobogatov. \emph{Hasse principle for Kummer varieties}. Algebra Number Theory, 10(4):813–841, 2016.
	\bibitem{K-W} Masato Kuwata \and Lan Wang. \emph{Topology of rational points on isotrivial elliptic surfaces}. ‎Int. Math. Res. Not., (4):113–123, 1993.
	\bibitem{Mazur} Barry Mazur. \emph{The topology of rational points}. Experiment. Math., 1(1):35–45, 1992.
	\bibitem{Mazur2} Barry Mazur. \emph{Speculations about the topology of rational points: an up-date}. Columbia university number theory seminar -- New-York, 1992, Astérisque no. 228  (1995), p. 165-181.
	\bibitem{Merel} Loïc Merel. \emph{Bornes pour la torsion des courbes elliptiques sur les corps de nombres}. Invent. Math. 124 (1996), no. 1-3, 437–449. 
	\bibitem{Monsky} Paul Monsky. \emph{Mock Heegner points and congruent numbers}. Math. Z., 204(1):45–67, 1990.
	\bibitem{Munshi}  Ritabrata Munshi. \emph{A note on simultaneous nonvanishing twists}. J. Number Theory 132 (2012), no. 4, 666–674.
	\bibitem{Petrow} Ian Petrow. \emph{Moments of $L'(\frac{1}{2})$ in the family of quadratic twists}. Int. Math. Res. Not. (6):1576–1612, 2014.
	\bibitem{Rohrlich} David E. Rohrlich. \emph{Variation of the root number in families of elliptic curves}. Compositio Math., 87(2):119–151, 1993.
	\bibitem{Serre} Jean-Pierre Serre. \emph{Topics in Galois theory}, volume 1 of Research Notes in Mathematics. Jones and Bartlett Publishers, Boston, MA, 1992.
	\bibitem{Silverman1} Joseph H. Silverman. \emph{The arithmetic of elliptic curves}. Second edition. Graduate Texts in Mathematics, 106. Springer, Dordrecht, 2009. xx+513 pp.
	\bibitem{Silverman2} Joseph H. Silverman. \emph{Heights and the specialization map for families of abelian varieties}. J. Reine Angew. Math. 342 (1983), 197–211.
	\bibitem{Skorobogatov} Alexei Skorobogatov. \emph{Torsors and rational points}, volume 144 of Cambridge Tracts in Mathematics. Cambridge University Press, 2001.
	\bibitem{Swinnerton-Dyer} Peter Swinnerton-Dyer. \emph{Density of rational points on certain surfaces.} Algebra Number Theory 7 (2013), no. 4, 835–851.
	\bibitem{VA} Anthony Várilly-Alvarado. \emph{Density of rational points on isotrivial rational elliptic surfaces}. Algebra Number Theory, 5(5):659–690, 2011.
\end{thebibliography}
\end{document}